\newtheorem{theorem}{Theorem}[section]
\newtheorem{remark}[theorem]{Remark}
\newtheorem{lemma}[theorem]{Lemma}
\newtheorem{proposition}[theorem]{Proposition}
\newcommand{\supp}{\operatorname{supp}}
\begin{document}

\title
{The Nehari manifold for a degenerate logistic parabolic equation }
\author{ Juliana Fernandes
\,and\, Liliane A. Maia
}

\date{} 

\maketitle

\begin{abstract}

\noindent The present paper analyses the behavior of solutions to a degenerate logistic equation with a nonlinear term of the form $b(x)f(u)$, where the weight function $b$ is assumed to be nonpositive. We exploit variational techniques and comparison principle in order to study the evolutionary dynamics. A crucial role is then played by the Nehari manifold, as we note how it changes as the parameter $\lambda$ in the equation or the function $b$ vary, affecting the existence and non-existence of stationary solutions. We describe a detailed picture of the positive dynamics and also address the local behavior of solutions near a nodal equilibrium, which sheds some further light on the study of the evolution of sign-changing solutions. 
\medskip



\end{abstract}


\section{Introduction}\label{intro} 

Our goal in this paper is to study the 
solutions of the following semilinear parabolic equation
\begin{align}\label{eq0}
	\begin{cases}
		\partial_t u=\Delta u+\displaystyle{\lambda u + b(x) |u|^{\nu -1}u},\quad &(x,t) \in \Omega \times (0,+ \infty),\\
		u \rvert_{\partial\Omega} =0,\quad &t \in (0,+\infty)\\
		u\rvert_{t=0} = u_0(x),\quad &x \in \Omega,
	\end{cases}
\end{align}
where $\Omega$ is an open smooth bounded domain 
in $\mathbb{R}^{N}$, $N\geq 2$, $\lambda$ is a real positive parameter, 
$1 < \nu < 2^*-1$, where $2^* =+ \infty$ if $N=2$, or $2^*= 2N/(N-2)$ if $N \geq 3$,
and $b$ is a continuous function satisfying $b(x) \leq 0$ and $b(x)=0$ in an open proper subset $\Omega_0$ of $\Omega$, with positive Lebesgue measure and smooth boundary.

The main purpose is to apply variational methods and comparison principle in order to analyze the behavior of solutions as the initial data varies in the phase space. The Nehari manifold $\mathcal{N}$ contained in the abstract space, associated to the energy functional of the stationary elliptic problem, will be used to locate the stationary solutions, and identify convergence regions of  evolutionary trajectories.
We note how the different ranges of $\lambda$ and the sign of the weight function
$b(x)$ deeply affect the global dynamics picture. 
To our knowledge, this is the first time the so-called Nehari approach is applied to address the asymptotic analysis of solutions to the logistic problem.

From the Population Dynamics point of view, this class of equations appears as an interplay of two well known classical laws: Malthusian and Verhulst (logistic) growth. In that context, $u(x,t)$ models the evolution of the distribution of a single species in the inhabiting area $\Omega$, $\lambda$ is related to the growth rate of the population $u$, $b(x)$ translates the crowding effects within the region $\Omega\setminus \Omega_0$. The region $\Omega_0$ is also referred to as the favorable region, as it represents the region where $u$ is allowed to enjoy exponential growth, as expected by the Malthus law. The analysis of the evolution of this problem may be found, for instance in the very complete and interesting work by J\'ulian L\'opez-Gomes \cite{LG05}.

The analysis of the asymptotic behavior of positive solutions for this class of degenerate logistic equations was also done in \cite{Arrieta}; see also references therein for some other related contributions.

Although the interest in non-negative solutions justifies itself from the biological motivation given above, sign-changing solutions have also been investigated for a wide class of reaction-diffusion equations. 
In that direction, some work has already been done for the analysis of solutions. Under very general dissipative conditions on the nonlinearity, it was obtained in \cite{RBVL} the existence of two extremal equilibria, which gives bounds for the asymptotic dynamics and therefore for the related global attractor. The result can also be applied to a large class of degenerate logistic equations. Also stability of such solutions is addressed in \cite{Kaji}.

On the analysis of the behavior of solutions to parabolic PDE's, some alternative tools have been used. Besides the usual comparison principle and sub/super solution, variational methods have also been applied in the theory. More precisely, the Nehari manifold $\mathcal{N}$ was proved to be important also for the study of the evolution dynamics, as it can be used as a borderline separating regions of global existence and blow-up.

More recently, the authors have proposed in \cite{FeMa} an analysis of the interplay of variational methods and dynamical systems, using the Nehari manifold to give a rather complete picture of the abstract phase space, for reaction-diffusion equations with asymptotically linear growth. The study was inspired by the ideas developed in \cite{GW05} and \cite{DMSW11}, for semilinear heat equations with the presence of finite time blow-up.

The related semilinear stationary elliptic equation has been extensively addressed using different approaches. General questions on existence, multiplicity, and nonexistence of positive solutions have been discussed. The main tools that have been applied are bifurcation, sub and super-solutions or minimization and linking methods
\cite{Ou1, AT, DPF}.
The existence problem for sign-changing solutions is a bit more delicate with just a few results in this setting to be found in the literature \cite{Ou1, DPF}. We tackle the existence problem of such solutions using Ambrosetti and Rabinowitz \cite{AR} Mountain Pass Theorem  on $\mathcal{N}$.

The paper is organized as follows. In Section 2 we present the stationary problem, and the geometric features of the associated Nehari manifold.
In Section 3, we prove existence, nonexistence of a positive solution under this new approach and obtain a sign-changing solution.
Finally, Section 4 is devoted to the parabolic dynamics. We get uniform boundedness of trajectories, and local behavior near the nodal solution.

\section{The stationary problem}

Firstly, we discuss existence and multiplicity of positive and nodal solutions for the stationary problem, as it will help us to get a better ideia of the global dynamic structure of solutions on the phase space. 

We consider the Hilbert space $H_{0}^{1}(\Omega)$ with its
standard scalar product and norm%
\[
\left\langle u,v\right\rangle :=\int_{\Omega}\nabla u\cdot\nabla
v,\text{\qquad}\left\Vert u\right\Vert :=\left(  \int_{\Omega}\left\vert \nabla u\right\vert ^{2}\right)  ^{1/2}.
\]
and the associated stationary elliptic problem
\begin{equation}\label{eq2elliptic}
	\begin{cases}
		-\Delta u=\lambda u +  b(x) |u|^{\nu - 1}u, \quad x \in \Omega,\\
		u \rvert_{\partial\Omega} =0, \quad u \in H_{0}^{1}(\Omega).
	\end{cases}
\end{equation}

Recall that the eigenvalues for the negative  Laplacian in a bounded domain,
with Dirichlet boundary condition, are given by
$	0<
	{\lambda_1(\Omega)}< {\lambda_2(\Omega)}<\cdots< {\lambda_n(\Omega)}<\cdots, \,\,\,\, {\lambda_j(\Omega)} \rightarrow\infty \mbox{ as } j\rightarrow\infty,
$
with eigenfunctions $\phi_j$.
Similarly, if  $\Omega_0 \not = \emptyset$, we denote by $\lambda_1(\Omega_0)$ the first eigenvalue of $- \Delta$ in the subset $\Omega_0$ also with Dirichlet boundary condition.

Using the method of sub and super solution, together with bifurcation arguments, 
Ouyang in \cite{Ou1} obtained a complete description on existence and non-existence of positive solutions of the elliptic nonlinear problem.
Notice that, since $b(x)\leq 0$ and  not identically zero, the condition $\int_\Omega b(x) \phi_1^{\nu+1}dx <0$ assumed  in  \cite{AT} is automatically satisfied. 

We summarize in the next theorem the classical results found in \cite{Ou1} and \cite{AT} about existence and uniqueness of positive solutions of the elliptic problem.
Using variational methods, we prove  in Lemma \ref{ATLemma2.2} the nonexistence statement, based  in \cite{AT}.

\begin{theorem}\label{OuyTheorem2}
	Assume that $b \leq 0$$\;(\not \equiv 0)$ is a  continuous function on $\Omega $ and 
	$$f(\lambda,x,u):= \lambda u + b(x) |u|^{\nu -1}u$$ 
	Then  it holds that, 
	\begin{itemize}
		\item[(i)] If $\Omega_0 = \emptyset$, then for every $\lambda >0$ there exists a unique positive solution $u$ of  problem \eqref{eq2elliptic}.
		\item[(ii)] If $\Omega_0 \not = \emptyset$, then for every $\lambda <\lambda_1(\Omega_0))$ there exists a unique positive solution $u$ of  problem \eqref{eq2elliptic}, and
		for every  $\lambda \geq \lambda_1(\Omega_0)$,  problem \eqref{eq2elliptic} admits no positive solution.
		%
	\end{itemize}
\end{theorem}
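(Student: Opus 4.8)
The plan is to split the assertion into existence, uniqueness, and nonexistence of a positive solution, handling the first two by the classical sub- and super-solution and sublinearity machinery of \cite{Ou1,AT} and reserving a variational (eigenvalue) argument for the nonexistence part, which is the only genuinely new ingredient; it is what Lemma~\ref{ATLemma2.2} will supply, following \cite{AT}. Below, $\phi_1>0$ and $\psi>0$ denote the principal Dirichlet eigenfunctions of $-\Delta$ on $\Omega$ and on $\Omega_0$, respectively.

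For existence of a positive solution — case (i), and case (ii) when $\lambda<\lambda_1(\Omega_0)$ — I would exhibit an ordered pair of sub/super-solutions and iterate monotonically. A small positive multiple of $\phi_1$ is a sub-solution; since $b\le0$, the extra superlinear term $b|u|^{\nu-1}u$ only makes the sub-solution inequality easier. The super-solution is where the hypothesis $\lambda<\lambda_1(\Omega_0)$ must enter: on $\Omega_0$ the equation degenerates to $-\Delta u=\lambda u$, and a positive super-solution of this reduced problem exists precisely because $\lambda$ lies below $\lambda_1(\Omega_0)$ — for instance the principal eigenfunction of a slightly enlarged domain, or the solution of $-\Delta v=\lambda v+1$ in $\Omega_0$ — while off $\Omega_0$, where $b<0$, a bounded super-solution can likewise be built using $\nu>1$; gluing these in the weak sense produces a global bounded super-solution dominating the sub-solution, and monotone iteration yields a positive solution between them. (Alternatively one minimizes $J(u)=\tfrac12\|u\|^2-\tfrac{\lambda}{2}\int_\Omega u^2-\tfrac{1}{\nu+1}\int_\Omega b|u|^{\nu+1}$ over the positive cone; the only delicate step, boundedness of minimizing sequences, is again secured by $\lambda<\lambda_1(\Omega_0)$.) For the verification of these facts I would cite \cite{Ou1,AT} rather than reproduce them.

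Uniqueness, in both cases, follows from the strict sublinearity $s\mapsto f(\lambda,x,s)/s=\lambda+b(x)s^{\nu-1}$, which is nonincreasing for $s>0$ and strictly decreasing wherever $b<0$. Given two positive solutions $u_1$ and $u_2$, I would run the Brezis--Oswald / D\'iaz--Saa argument: test the two equations against $(u_1^2-u_2^2)/u_1$ and $(u_2^2-u_1^2)/u_2$ and add them, so that the underlying convexity inequality makes the left-hand side nonnegative — with equality only if $u_1\equiv k u_2$ — while the right-hand side equals $\int_\Omega (f(\lambda,x,u_1)/u_1-f(\lambda,x,u_2)/u_2)(u_1^2-u_2^2)\le 0$. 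Hence both sides vanish, $u_1=k u_2$, and substituting back forces $k=1$ because $b\not\equiv 0$.

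The \emph{crux}, and the step I expect to be the main obstacle, is nonexistence when $\lambda\ge\lambda_1(\Omega_0)$ (so that $\Omega_0\ne\emptyset$). Suppose $u>0$ solved \eqref{eq2elliptic}; on $\Omega_0$, where $b\equiv 0$, it satisfies $-\Delta u=\lambda u$, and by elliptic regularity and the strong maximum principle $u>0$ up to $\partial\Omega_0$. Multiplying by $\psi$ and integrating by parts over $\Omega_0$ gives $(\lambda_1(\Omega_0)-\lambda)\int_{\Omega_0}u\psi=-\int_{\partial\Omega_0}u\,\partial_\nu\psi$; Hopf's boundary lemma yields $\partial_\nu\psi<0$ on $\partial\Omega_0$, and since $u>0$ on the part of $\partial\Omega_0$ lying inside $\Omega$, the right-hand side is strictly positive, forcing $\lambda<\lambda_1(\Omega_0)$ — a contradiction. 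The point requiring care is precisely this boundary term: $u|_{\Omega_0}$ does not belong to $H^1_0(\Omega_0)$, so it is genuinely nonzero, and making the argument rigorous — the variational formulation of \cite{AT} carried out in Lemma~\ref{ATLemma2.2} — rests on the assumed smoothness of $\partial\Omega_0$, which guarantees $\psi\in C^1(\bar{\Omega}_0)$ and the validity of the Hopf estimate. When $\Omega_0=\emptyset$ there is nothing to prove on the nonexistence side, and the existence argument above covers every $\lambda>0$, as claimed in (i).
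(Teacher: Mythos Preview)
Your proposal is correct and matches the paper's treatment. The paper does not give a self-contained proof of this theorem: existence and uniqueness are simply attributed to \cite{Ou1,AT} (exactly the references you invoke, and your sub/super-solution and Brezis--Oswald sketches are faithful to those sources), while the nonexistence clause is singled out as Lemma~\ref{ATLemma2.2}, whose proof is precisely your argument---multiply by the first Dirichlet eigenfunction $\phi_1^0$ on $\Omega_0$, integrate by parts, and use Hopf's lemma on $\partial_\nu\phi_1^0$ together with $u>0$ on $\partial\Omega_0\cap\Omega$ to force the sign contradiction.
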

\begin{lemma}\label{ATLemma2.2}
	Problem \eqref{eq2elliptic} does not admit a positive solution for any $\lambda \geq \lambda_1(\Omega_0)$.
\end{lemma}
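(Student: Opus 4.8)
My plan is to argue by contradiction, using the first Dirichlet eigenfunction of $\Omega_0$ to build a test function, in the spirit of Picone's identity. Suppose $u\in H_0^1(\Omega)$ is a positive solution of \eqref{eq2elliptic} for some $\lambda\ge\lambda_1(\Omega_0)$. Since $b$ is continuous and $u$ bounded, the equation reads $-\Delta u=c(x)u$ with $c(x)=\lambda+b(x)|u(x)|^{\nu-1}$ bounded, so elliptic regularity and the strong maximum principle give $u>0$ in $\Omega$; in particular $u\ge c_0>0$ on $\overline{\Omega_0}$. Let $\phi_1\ge 0$, $\phi_1\not\equiv 0$, be a first eigenfunction of $-\Delta$ on $\Omega_0$ with Dirichlet data, extended by $0$ to all of $\Omega$, so that $\phi_1\in H_0^1(\Omega)$, $\supp\phi_1\subset\overline{\Omega_0}$, and $\int_\Omega|\nabla\phi_1|^2=\lambda_1(\Omega_0)\int_\Omega\phi_1^2$.

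The first step is to check that $w:=\phi_1^2/u$ is an admissible test function: it lies in $H_0^1(\Omega)$, being bounded and compactly supported in $\overline{\Omega_0}$ where $u$ is bounded away from $0$, with $\nabla w=\frac{2\phi_1}{u}\nabla\phi_1-\frac{\phi_1^2}{u^2}\nabla u$. Inserting $w$ into the weak formulation of \eqref{eq2elliptic} and using that $b\equiv 0$ on $\Omega_0\supset\supp\phi_1$ and $u>0$ there (so the nonlinear term disappears), one obtains
\[
\int_\Omega \nabla u\cdot\nabla w=\lambda\int_{\Omega_0}\phi_1^2 .
\]
On the other hand, the pointwise Picone identity
\[
|\nabla\phi_1|^2-\nabla u\cdot\nabla\Big(\tfrac{\phi_1^2}{u}\Big)=\Big|\nabla\phi_1-\tfrac{\phi_1}{u}\nabla u\Big|^2\ge 0,
\]
integrated over $\Omega_0$, yields $\lambda_1(\Omega_0)\int_{\Omega_0}\phi_1^2=\int_{\Omega_0}|\nabla\phi_1|^2\ge\lambda\int_{\Omega_0}\phi_1^2$, hence $\lambda\le\lambda_1(\Omega_0)$.

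It then remains to promote this to a strict inequality. If $\lambda=\lambda_1(\Omega_0)$, the Picone remainder $\int_{\Omega_0}|\nabla\phi_1-(\phi_1/u)\nabla u|^2$ vanishes, so $\nabla(\phi_1/u)=0$ a.e.\ and $\phi_1/u$ is constant on each connected component of $\Omega_0$ on which $\phi_1$ is not identically zero; but on the boundary of such a component $\phi_1=0$ while $u>0$, forcing that constant to be $0$ and hence $\phi_1\equiv 0$, a contradiction. Thus $\lambda<\lambda_1(\Omega_0)$, contradicting the hypothesis, and no positive solution can exist when $\lambda\ge\lambda_1(\Omega_0)$.

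The only delicate point is the admissibility of $\phi_1^2/u$ near $\partial\Omega_0$: if one does not assume $\overline{\Omega_0}\subset\Omega$, the quotient need not be bounded where $\partial\Omega_0$ meets $\partial\Omega$. I expect this, together with bookkeeping in the equality case, to be the main technical nuisance; it is handled routinely by testing with $\phi_1^2/(u+\varepsilon)$ instead, deriving $\lambda\int_{\Omega_0}\frac{u}{u+\varepsilon}\phi_1^2\le\lambda_1(\Omega_0)\int_{\Omega_0}\phi_1^2$, and letting $\varepsilon\downarrow 0$ by monotone convergence. The core computation itself is immediate once the test function is in place.
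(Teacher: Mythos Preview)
Your proof is correct, but it follows a different classical route from the paper's. The paper tests the equation directly against $\phi_1^0$ (not against $\phi_1^{0\,2}/u$): integrating by parts twice on $\Omega_0$ yields
\[
\int_{\partial\Omega_0} u\,\frac{\partial\phi_1^0}{\partial\eta}
-(\lambda-\lambda_1(\Omega_0))\int_{\Omega_0} u\,\phi_1^0=0,
\]
since $b\equiv 0$ on $\Omega_0$. The Hopf lemma gives $\partial\phi_1^0/\partial\eta<0$ on $\partial\Omega_0$, while $u>0$ there (because $\Omega_0$ is a proper subdomain of the connected set $\Omega$, so $\partial\Omega_0$ meets the interior of $\Omega$); hence the boundary term is strictly negative and the identity is impossible for every $\lambda\ge\lambda_1(\Omega_0)$ in one stroke. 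Your Picone argument instead first yields $\lambda\le\lambda_1(\Omega_0)$ and then needs a separate analysis of the equality case, plus the $\varepsilon$-regularisation $\phi_1^2/(u+\varepsilon)$ to justify the test function. The trade-off is that the paper's proof is shorter and avoids the equality discussion, at the price of requiring enough regularity of $u$ and of $\partial\Omega_0$ to make the boundary integral and Hopf's lemma meaningful (both are available here), whereas your approach stays entirely at the weak-formulation level and does not invoke the normal derivative of $\phi_1^0$.
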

\begin{proof}
	Suppose by contradiction there exists $u$ a positive solution of problem  \eqref{eq2elliptic}  and multiply the equation by $\phi_1^0$, the positive first eigenfunction of  $-\Delta$ in $\Omega_0$.
	Integrating by parts on this open set with smooth boundary $\partial \Omega_0$, we obtain
	\[
	\int _{\partial \Omega_0}u\frac{\partial \phi_1^0}{\partial \eta}
	-(\lambda -\lambda_1(\Omega_0))\int_{\Omega_0}u \phi_1^0
	- \int_{\Omega_0} b(x)|u|^{\nu-1}u \phi_1^0 =0,
	\]
	where $\eta $ is the normal exterior unitary vector on $\partial \Omega_0$.
	On the other hand, $\int_{\Omega_0} b(x)|u|^{\nu-1}u \phi_1^0=0$ and
	\[
	\int _{\partial \Omega_0}u\frac{\partial \phi_1^0}{\partial \eta}<0 \quad \text{and }
	\quad  (\lambda -\lambda_1(\Omega_0))\int_{\Omega_0}u \phi_1^0> 0,
	\]
	which yields a contradiction. 
\end{proof}

In order to apply some variational arguments, we consider the
functional associated with the equation in \eqref{eq2elliptic}, $I:H_{0}^{1}(\Omega)\rightarrow\mathbb{R}$, given by
\begin{equation}\label{energia}
	I(u)= \frac{1}{2}\|u\|^{2} - 
	\frac{1}{2}	\int_{\Omega} \lambda  u^2
	- 	\frac{1}{\nu+1}\int_{\Omega}b(x)|u|^{\nu + 1}.
\end{equation}
which is of class $\mathcal{C}^{2},$ with derivative
\begin{equation}\label{derivadaenergia}
	I^{\prime}(u)v=\int_{\Omega}  \nabla u\cdot\nabla v
	-\int_{\Omega} \lambda u v -  \int_{\Omega} b(x)|u|^{\nu - 1}u v,\qquad u,v\in H_{0}^{1}(\Omega).
\end{equation}

The functional $J:H_{0}^{1}(\Omega) \rightarrow\mathbb{R}$
given by%
\begin{equation}\label{neharifunction}
	J(u):=I^{\prime}(u)u=\int_{\Omega} |\nabla
	u|^{2}  -\int_{\Omega} \lambda u^2 -  \int_{\Omega} b(x)|u|^{\nu +1}
\end{equation}
is of class $\mathcal{C}^{1}$ and defines 
the so-called Nehari manifold%
\[
\mathcal{N}:=\{u\in H_{0}^{1}(\Omega):\text{ \ }%
J(u)=0\}.
\]
We also consider the complementary sets
\begin{equation}\label{n+}
	\mathcal{N}_+:=\{u\in H_{0}^{1}(\Omega):u\neq0,\text{ \ }%
	J(u)>0\}
\;\;
\text{and}
\;\;
	\mathcal{N}_{-}:=\{u\in H_{0}^{1}(\Omega):u\neq0,\text{ \ }%
	J(u)<0\}.
\end{equation}
So if $u \in \mathcal{N}$, substituting $J(u)=0$
in the functional $I$, gives
\begin{align}\label{Ionnehari}
	I(u)
	&=\left( \frac{1}{2}- \frac{1}{\nu +1}\right)
	\int_\Omega (|\nabla u|^2 - \lambda u^2)\;dx
	=\left( \frac{1}{2}- \frac{1}{\nu +1}\right)
	\int_\Omega b(x)|u|^{\nu + 1}\;dx.
\end{align}


The points in the Nehari manifold $\mathcal{N}$ correspond to critical points of the maps 
\begin{equation}\label{fiber}
\phi_u: t\mapsto I(tu):= \frac{t^2}{2}\int_{\Omega} |\nabla
u|^{2}  -\int_{\Omega}  F(\lambda,x,tu)\;,
\end{equation}
where $F(\lambda, x ,u)= \int_0^uf(\lambda, x, s) ds$, and so it is natural to divide
$\mathcal{N}$ into three subsets corresponding to local minima, local maxima and points of inflexion of fibrering maps $\phi_u$. Notice that 
\[
\phi_u^{''}(t)=\int_\Omega (|\nabla u|^2 - f_u(\lambda,x,tu)u^2)\;dx.
\]
It is  then naturally defined the following sets, 
\[
\mathcal{S}^+=\left \{ u \in \mathcal{N} : \int_\Omega (|\nabla u|^2 -  f_u(\lambda,x,u)u^2)\;dx >0 \right \},
\]
\[
\mathcal{S}^-=\left\{ u \in \mathcal{N} : \int_\Omega (|\nabla u|^2 -  f_u(\lambda, x,u)u^2)\;dx <0 \right \},
\]
and
\[
\mathcal{S}^0=\left\{ u \in \mathcal{N} : \int_\Omega (|\nabla u|^2 -  f_u(\lambda, x,u)u^2)\;dx =0 \right \}.
\]
Moreover, it follows that they can be rewritten,
\begin{align} \label{Splus}
	\nonumber
	\mathcal{S}^+
	&=\{ u \in \mathcal{N} : (1- \nu) \int_\Omega b(x)|u|^{\nu + 1} \;dx >0\}\\
	&=\{ u \in \mathcal{N} : \int_\Omega b(x)|u|^{\nu + 1} \;dx <0\}.
\end{align}
Similarly
$\mathcal{S}^-=\{  u \in \mathcal{N} : \int_\Omega b(x)|u|^{\nu + 1} \;dx >0\}$	 and
$\mathcal{S}^0=\{ u \in \mathcal{N} :\int_\Omega b(x)|u|^{\nu + 1} \;dx =0\}$.

\begin{remark}\label{signI}
	If  $u \in \mathcal{N}$, from \eqref{Ionnehari} and \eqref{Splus} it holds that  $u \in \mathcal{S}^+$ if and only if $I(u) <0$.
	One similarly gets that , $u \in \mathcal{S}^-$ and $u \in \mathcal{S}^0$ if and only if $I(u) >0$ and $I(u) =0$, respectively.
\end{remark}

In what follows, based in \cite{BZ}, we need to consider the subsets
\[
L^+:= \left\{u \in H_0^1(\Omega): \Vert u\Vert=1, \int_\Omega(|\nabla u|^2 - \lambda u^2)\;dx >0 \right \}
\]
and similarly $L^-$ and $L^0$, replacing $>$ by $<$ and $=$, respectively.
We also define
\[
B^+:= \left \{ u \in H_0^1(\Omega): \Vert u\Vert=1, \int_\Omega b(x) |u|^{\nu +1}\;dx >0\right \}
\]
and $B^-$ and $B^0$ analogously.
The next  proposition explores the role played by $b(x) \leq 0$ on this setting.

\begin{proposition}\label{Tsupport}
	 $(\it i)$ If  $\lambda_1(\Omega)<\lambda < \lambda_1(\Omega_0)$,  then $\overline {L^-} \cap {B^0}  = \emptyset$,
	and \\
	$(\it ii)$
	if $\lambda_1(\Omega_0)< \lambda$,  then $\overline {L^-} \cap {B^0} \not = \emptyset$.  
\end{proposition}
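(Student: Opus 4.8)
The plan is to reduce both statements to the Rayleigh‐quotient characterization $\lambda_1(\Omega_0)=\inf\{\int_{\Omega_0}|\nabla v|^2:\ v\in H_0^1(\Omega_0),\ \int_{\Omega_0}v^2=1\}$, after first identifying the elements of $B^0$ as functions that vanish outside $\overline{\Omega_0}$. The key preliminary observation is about the role of $b$: since $b$ is continuous, $b\le0$, and $b$ vanishes exactly on the favorable region (so $b<0$ a.e.\ on $\Omega\setminus\overline{\Omega_0}$), any $u\in B^0$ satisfies $0=\int_\Omega b(x)|u|^{\nu+1}=\int_{\Omega\setminus\overline{\Omega_0}}b(x)|u|^{\nu+1}$ with strictly negative integrand wherever $u\neq0$; hence $u=0$ a.e.\ on $\Omega\setminus\overline{\Omega_0}$. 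Because $\partial\Omega_0$ is smooth and $u\in H_0^1(\Omega)$, the standard fact that an $H^1$ function vanishing a.e.\ outside a Lipschitz subdomain lies in the $H_0^1$ of that subdomain gives $u|_{\Omega_0}\in H_0^1(\Omega_0)$, with $\int_\Omega|\nabla u|^2=\int_{\Omega_0}|\nabla u|^2$ and $\int_\Omega u^2=\int_{\Omega_0}u^2$.

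For part $(i)$ I would argue by contradiction: suppose $u\in\overline{L^-}\cap B^0$, so $\|u\|=1$ (in particular $u\neq0$) and, taking $u_n\to u$ in $H_0^1(\Omega)$ with $u_n\in L^-$ and using continuity of $v\mapsto\int_\Omega(|\nabla v|^2-\lambda v^2)$ on $H_0^1(\Omega)$ (the $L^2$ term being controlled via Poincar\'e), one gets $\int_\Omega(|\nabla u|^2-\lambda u^2)\le0$. By the preliminary observation $u|_{\Omega_0}\in H_0^1(\Omega_0)\setminus\{0\}$, whence $\int_\Omega|\nabla u|^2=\int_{\Omega_0}|\nabla u|^2\ge\lambda_1(\Omega_0)\int_{\Omega_0}u^2=\lambda_1(\Omega_0)\int_\Omega u^2>\lambda\int_\Omega u^2$, using $\lambda<\lambda_1(\Omega_0)$ and $\int_\Omega u^2>0$. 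This contradicts the previous inequality. Note that the hypothesis $\lambda_1(\Omega)<\lambda$ is only needed to guarantee $L^-\neq\emptyset$ and plays no further role in the argument.

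For part $(ii)$ I would simply exhibit an element of the intersection. Extend the positive first Dirichlet eigenfunction $\phi_1^0$ of $-\Delta$ on $\Omega_0$ by zero to $\Omega$; smoothness of $\partial\Omega_0$ makes this extension lie in $H_0^1(\Omega)$. Set $w:=\phi_1^0/\|\phi_1^0\|$, so $\|w\|=1$. Since $\supp w\subseteq\overline{\Omega_0}$ and $b\equiv0$ there (by continuity), $\int_\Omega b(x)|w|^{\nu+1}=0$, i.e.\ $w\in B^0$; and $\int_\Omega(|\nabla w|^2-\lambda w^2)=\|\phi_1^0\|^{-2}\,(\lambda_1(\Omega_0)-\lambda)\int_{\Omega_0}|\phi_1^0|^2<0$ because $\lambda>\lambda_1(\Omega_0)$, so $w\in L^-\subseteq\overline{L^-}$. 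Hence $w\in\overline{L^-}\cap B^0$, which is therefore nonempty.

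The only nontrivial ingredient — which I would isolate either as a one‑line lemma or as a citation — is the identification used in the first paragraph: $u\in H_0^1(\Omega)$ with $u=0$ a.e.\ outside $\overline{\Omega_0}$ implies $u|_{\Omega_0}\in H_0^1(\Omega_0)$; this is exactly where the smoothness of $\partial\Omega_0$ is needed. The argument also quietly relies on $\Omega_0$ being the full (interior of the) zero set of $b$, so that $B^0$‑functions are supported in $\overline{\Omega_0}$: if $\{b=0\}$ had interior strictly larger than $\Omega_0$, then $B^0$ would contain a first eigenfunction of that larger set and part $(i)$ would fail for $\lambda$ lying between the two first eigenvalues. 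Everything else is the Rayleigh quotient and an explicit test function.
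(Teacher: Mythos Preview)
Your proof is correct and follows essentially the same route as the paper's: for $(i)$, argue by contradiction by showing that any $u\in B^0$ is supported in $\overline{\Omega_0}$, then invoke the Rayleigh quotient for $\lambda_1(\Omega_0)$ to contradict membership in $\overline{L^-}$; for $(ii)$, exhibit the (normalized) first Dirichlet eigenfunction $\phi_1^0$ extended by zero. Your treatment is in fact a bit more careful than the paper's in two places: you handle the support argument measure-theoretically rather than via the paper's ``w.l.o.g.\ assume $w$ is continuous'', and you make explicit the step $u\in H_0^1(\Omega)$, $u=0$ a.e.\ on $\Omega\setminus\overline{\Omega_0}$ $\Rightarrow$ $u|_{\Omega_0}\in H_0^1(\Omega_0)$, which the paper uses tacitly; your closing remark that the argument needs $\Omega_0$ to coincide with the interior of $\{b=0\}$ is also a point the paper uses implicitly without stating.
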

\begin{proof}
	Suppose $w \in \overline {L^-} \cap {B^0} $, so by definition $w \not= 0$ and 
	\begin{equation}\label{support}
		0=
		\int_{\Omega} - b(x) |w|^{\nu+1}\geq \int_{\Omega_0} - b(x) |w|^{\nu+1}.
	\end{equation}
	We claim that  the support of $w$ is contained in the closure of $\Omega_0$. Indeed, w.l.o.g. assume
	$w$ is continuous, and there is $x_0 \in \Omega \setminus \overline\Omega_0$ such that 
	$|w(x_0)|=\delta >0$. 
	Then, there is a small ball $B_\varepsilon(x_0) \subset \Omega \setminus \overline\Omega_0$ inside which $|w(x)| \geq \delta/2$ and thus, by \eqref{support}
	\[
	0=
	\int_{\Omega}  -b(x) |w|^{\nu+1} \geq 
	\int_{B_\varepsilon(x_0)}  -b(x) |w|^{\nu+1} \geq
	\int_{B_\varepsilon(x_0)} - b(x) |\frac{\delta}{2}|^{\nu+1}>0,
	\]
	which is impossible. So $supp\{w\} \subset \overline\Omega_0$.
	But then, if $\lambda < \lambda_1(\Omega_0)$,
	\begin{equation*}
		0<\int_{\Omega_0}  (\lambda_1(\Omega_0)-\lambda)  w^2
		\leq
		\int_{\Omega_0}   | \nabla w |^2 -\lambda  w^2
		= 	\int_{\Omega}   | \nabla w |^2 -\lambda  w^2
		\leq 0,
	\end{equation*}
	since $w \in \overline {L^-}$, which gives a contradiction, and hence $\overline {L^-} \cap {B^0}  = \emptyset$.
	
	In case $\lambda_1(\Omega_0)< \lambda$,	let $\phi_1^0$ be the positive (normalized) eigenfunction associated with the first eigenvalue $\lambda_1(\Omega_0)$. Then, the support of $\phi_1^0$ is equal to $\overline {\Omega_0}$ and
	$
	\int_{\Omega}  b(x) |\phi_1^0|^{\nu+1}= \int_{\Omega_0}  b(x) |\phi_1^0|^{\nu+1}=0,
	$
	and so $\phi_1^0 \in B^0$. Moreover,
	$\phi_1^0 \in L^-$  since
	\[
	\int_{\Omega}   | \nabla \phi_1^0 |^2 -\lambda  (\phi_1^0)^2=
	(\lambda_1(\Omega_0)-\lambda)\Vert \phi_1^0\Vert_{L^2(\Omega_0)} <0.
	\]
\end{proof}

We present the next result found in \cite[Theorem 4.2]{BZ}o9, with minor adaptations. We include a proof for the sake of completeness.
\begin{theorem}\label{BZTheorem4.2}
	Suppose  $\overline {L^-} \cap {B^0}  = \emptyset$.  Then
		$(\it i)$  $\mathcal{S}^0 = \{0\}$;
		$(\it ii)$
		 $\mathcal{S}^- = \emptyset$;
		$(\it iii)$
		$\mathcal{S}^+$ is bounded;
\end{theorem}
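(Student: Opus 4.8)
The three assertions all flow from one rescaling observation together with the compactness of $H^1_0(\Omega)\hookrightarrow L^{\nu+1}(\Omega)$. First, for $u\in\mathcal{N}\setminus\{0\}$ write $w:=u/\Vert u\Vert$. The Nehari identity $\int_\Omega(|\nabla u|^2-\lambda u^2)=\int_\Omega b(x)|u|^{\nu+1}$ (the equality of the two right‑hand sides in \eqref{Ionnehari}) rescales, after dividing by $\Vert u\Vert^2$, to
\[
\int_\Omega\bigl(|\nabla w|^2-\lambda w^2\bigr)\,dx=\Vert u\Vert^{\,\nu-1}\int_\Omega b(x)|w|^{\nu+1}\,dx ,
\]
so the two integrals displayed here have the same sign; thus, in the notation introduced before the statement, $u\in\mathcal{S}^+\Rightarrow w\in L^-\cap B^-$, $u\in\mathcal{S}^0\Rightarrow w\in L^0\cap B^0$, and $u\in\mathcal{S}^-\Rightarrow w\in L^+\cap B^+$. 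Item $(ii)$ is then immediate and uses only $b\le 0$: since $\int_\Omega b(x)|u|^{\nu+1}\le0$ for every $u$, the set $B^+$ is empty, hence $\mathcal{S}^-=\emptyset$.

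The key point for $(i)$ and $(iii)$ is the following sharpening of the hypothesis: \emph{$\overline{L^-}\cap B^0=\emptyset$ forces $L^0\cap B^0=\emptyset$ as well.} To prove it, take $w$ with $\Vert w\Vert=1$, $\int_\Omega(|\nabla w|^2-\lambda w^2)=0$ and $w\in B^0$. If $\lambda>\lambda_1(\Omega)$, I would perturb $w$ along the first Dirichlet eigenfunction $\phi_1$ of $\Omega$: for $w_t:=(w+t\phi_1)/\Vert w+t\phi_1\Vert$, a direct computation (using $\int_\Omega\nabla w\cdot\nabla\phi_1=\lambda_1(\Omega)\int_\Omega w\phi_1$) shows that the first and second $t$‑derivatives at $t=0$ of $t\mapsto\int_\Omega\bigl(|\nabla(w+t\phi_1)|^2-\lambda(w+t\phi_1)^2\bigr)\,dx$ are $2(\lambda_1(\Omega)-\lambda)\int_\Omega w\phi_1$ and $2(\lambda_1(\Omega)-\lambda)\Vert\phi_1\Vert_{L^2(\Omega)}^2<0$; hence this function is negative for a suitably chosen small $t$, so $w_t\in L^-$ while $w_t\to w$, giving $w\in\overline{L^-}\cap B^0$, a contradiction. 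If instead $\lambda\le\lambda_1(\Omega)$, then $0=\int_\Omega(|\nabla w|^2-\lambda w^2)\ge(\lambda_1(\Omega)-\lambda)\int_\Omega w^2\ge0$ forces $\lambda=\lambda_1(\Omega)$ and $w=\pm\phi_1/\Vert\phi_1\Vert$, whence $\int_\Omega b(x)|w|^{\nu+1}<0$ (since $b\le0$, $b\not\equiv0$, $\phi_1>0$ on $\Omega$), contradicting $w\in B^0$. Either way $L^0\cap B^0=\emptyset$; combined with $L^-\cap B^0\subset\overline{L^-}\cap B^0=\emptyset$, no unit vector with $\int_\Omega(|\nabla w|^2-\lambda w^2)\le0$ can lie in $B^0$.

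Granting this, $(i)$ is short: $0\in\mathcal{S}^0$ trivially, and if $u\in\mathcal{S}^0\setminus\{0\}$ then $\int_\Omega b(x)|u|^{\nu+1}=0$, so by the rescaling observation $w=u/\Vert u\Vert$ is a unit vector with $\int_\Omega(|\nabla w|^2-\lambda w^2)=0$ and $w\in B^0$ — impossible. Hence $\mathcal{S}^0=\{0\}$. For $(iii)$ I would argue by contradiction and compactness. Suppose $u_n\in\mathcal{S}^+$ with $\Vert u_n\Vert\to\infty$, and set $w_n:=u_n/\Vert u_n\Vert$. By the rescaling observation $\int_\Omega(|\nabla w_n|^2-\lambda w_n^2)=1-\lambda\int_\Omega w_n^2<0$, and rearranging,
\[
0<\int_\Omega(-b)\,|w_n|^{\nu+1}\,dx=\frac{\lambda\int_\Omega w_n^2\,dx-1}{\Vert u_n\Vert^{\,\nu-1}}\le\frac{\lambda/\lambda_1(\Omega)-1}{\Vert u_n\Vert^{\,\nu-1}}\longrightarrow 0 ,
\]
using $\int_\Omega w_n^2\le\Vert w_n\Vert^2/\lambda_1(\Omega)$ (Poincaré) and $\nu>1$; moreover $\int_\Omega w_n^2>1/\lambda$. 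Passing to a subsequence, $w_n\rightharpoonup w$ in $H^1_0(\Omega)$ and $w_n\to w$ in $L^2(\Omega)$ and $L^{\nu+1}(\Omega)$ (Rellich, since $\nu+1<2^*$); then $\int_\Omega b(x)|w|^{\nu+1}=0$ by continuity of $b$, $\int_\Omega w^2\ge1/\lambda$ so $w\neq0$, and by weak lower semicontinuity $\int_\Omega(|\nabla w|^2-\lambda w^2)\le\liminf_n\bigl(1-\lambda\int_\Omega w_n^2\bigr)\le0$. Thus the unit vector $w/\Vert w\Vert$ lies in $B^0$ and has $\int_\Omega\bigl(|\nabla(w/\Vert w\Vert)|^2-\lambda(w/\Vert w\Vert)^2\bigr)\le0$, which is impossible by the previous paragraph. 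Therefore $\mathcal{S}^+$ is bounded in $H_0^1(\Omega)$.

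The step I expect to demand the most care is the claim $L^0\cap B^0=\emptyset$, i.e.\ showing that a normalized element of $\mathcal{S}^0$ lands in the \emph{closure} $\overline{L^-}$ and not merely in $L^-\cup L^0$; this is exactly where the perturbation along $\phi_1$ (hence the strict inequality $\lambda>\lambda_1(\Omega)$) is needed, the degenerate range $\lambda\le\lambda_1(\Omega)$ being absorbed through the equality case of Poincaré's inequality and the sign of $b$. The remaining ingredients — the rescaling identity, the uniform bound in $(iii)$, and the passage to the limit in the subcritical term $\int_\Omega b(x)|w_n|^{\nu+1}$ — are routine.
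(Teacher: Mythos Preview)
Your proof is correct and follows essentially the same route as the paper's: the rescaling to unit vectors, the sign of $b$ for $(ii)$, and the weak-compactness/Rellich argument for $(iii)$. The only notable difference is that the paper asserts the inclusion $L^0\cap B^0\subset\overline{L^-}\cap B^0$ in a single line (both for $(i)$ and in the strong-limit case of $(iii)$), whereas you explicitly justify it via the perturbation along $\phi_1$---so your version is, if anything, slightly more careful at exactly the point you flagged as delicate.
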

\begin{proof}
	$(i)$ Suppose $u_0 \in \mathcal{S}^0 \setminus \{0\}$.
	Then $\frac{u_0}{\Vert u_0\Vert} \in L^0 \cap B^0 \subset \overline {L^-} \cap {B^0} = \emptyset$, which is impossible. Hence $\mathcal{S}^0=\{0\}$.\\
	$(ii)$ Since $b(x) \leq 0$, then by \eqref{Splus} $\mathcal{S}^- = \emptyset$.\\
	$(iii)$ Suppose $\mathcal{S}^+$ is unbounded. Then, there exists a sequence $\{u_n\} \in \mathcal{S}^+$ such that
	\begin{equation} \label{Splusbounded}
		\int_\Omega(|\nabla u_n|^2 - \lambda u_n^2)\;dx=  \int_\Omega b(x) |u_n|^{\nu +1}\;dx<0
	\end{equation}
	and $\Vert u_n\Vert \to \infty$.
	Take $v_n:=\frac{u_n}{\Vert u_n \Vert}$, then $v_n \rightharpoonup v$ in $H_0^1(\Omega)$ and, by Sobolev compact embedding, $v_n\rightarrow v $ in $L^q(\Omega)$, for $2 \leq q < 2^*$, and $v_n(x)\rightarrow v(x)$ a.e. in $\Omega$.
	Dividing \eqref{Splusbounded} by $\Vert u_n\Vert^2$ yields
	\begin{equation}\label{ineqvn}
	\int_{\Omega}   | \nabla v_n |^2 -\lambda  v_n^2\;dx = \int_{\Omega} b(x) |v_n|^{\nu+1}\Vert u_n\Vert^{\nu -1}\;dx.
	\end{equation}
	Since the left hand side is uniformly bounded but $\Vert u_n\Vert \to \infty$, it follows that\\
	$\lim_{n \to \infty} \int_{\Omega} b(x) |v_n|^{\nu+1}\;dx =0$, and hence 
	$ \int_{\Omega} b(x) |v|^{\nu+1}\;dx =0$.
	Now we claim that $v_n \to v$ strongly in $H^1_0(\Omega)$.
	Indeed, if $v_n$ does not converge strongly to $v$, 
	\begin{equation} \label{wlsc}
		\int_{\Omega}   | \nabla v |^2 -\lambda  v^2
		<
		\liminf_{n \to \infty} \int_{\Omega}   | \nabla v_n |^2 -\lambda  v_n^2\leq0.
	\end{equation}
	Hence $v\not = 0$ and $\frac{v}{\Vert v \Vert} \in  {L^-}$. 
	We conclude that  $\frac{v}{\Vert v \Vert} \in L^- \cap B^0 \subset  \overline {L^-} \cap {B^0}$, 
	contradicting the assumption.
	So $v_n \to v$, $\Vert v \Vert =1$, and $v \in B^0$.
	Moreover, by \eqref{ineqvn}
	\[
	\int_{\Omega}   | \nabla v |^2 -\lambda  v^2 = 	\liminf_{n \to \infty} \int_{\Omega}   | \nabla v_n |^2 -\lambda  v_n^2 \leq 0,
	\]
	which implies $v \in \overline {L^-}$. Thus $v \in \overline {L^-} \cap B^0$, which is an absurd. This completes the proof.
\end{proof}

The delicate study of the different subsets in the complement of $\mathcal{N}$ are fundamental to our developments later in the parabolic setting.
We follow the literature and denote, for $k \in \mathbb{R}$,
$
I^k(u)=\{u\in H_0^1(\Omega): I(u)<k  \}.
$

\begin{lemma}\label{GWLemma5}
	Suppose $\overline {L^-} \cap {B^0} = \emptyset$. Then
	for any $k>0$, it holds that $I^k \cap \mathcal{N}_+$ is bounded in $H_0^1(\Omega)$.
\end{lemma}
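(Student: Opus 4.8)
The plan is to argue by contradiction, mimicking the proof of Theorem~\ref{BZTheorem4.2}(iii). Suppose $I^k\cap\mathcal{N}_+$ is unbounded and choose $u_n\in I^k\cap\mathcal{N}_+$ with $\|u_n\|\to\infty$. Put $v_n:=u_n/\|u_n\|$, so $\|v_n\|=1$ and, along a subsequence, $v_n\rightharpoonup v$ in $H_0^1(\Omega)$, $v_n\to v$ in $L^q(\Omega)$ for every $2\le q<2^{*}$ (in particular for $q=2$ and $q=\nu+1$) and $v_n\to v$ a.e. in $\Omega$. Abbreviating
\[
A_n:=\int_\Omega\bigl(|\nabla v_n|^2-\lambda v_n^2\bigr)\,dx=1-\lambda\int_\Omega v_n^2\,dx ,\qquad
C_n:=\int_\Omega b(x)|v_n|^{\nu+1}\,dx\le0 ,
\]
the bound $I(u_n)<k$ becomes, after using the homogeneity of each term of $I$,
\[
\tfrac12\|u_n\|^2A_n-\tfrac1{\nu+1}\|u_n\|^{\nu+1}C_n=I(u_n)<k .
\]

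The crux is to extract two consequences from this single inequality. First, since $-C_n\ge0$ the second summand is nonnegative, hence $\tfrac12\|u_n\|^2A_n<k$ and therefore $\limsup_n A_n\le0$. Second, Poincar\'e's inequality gives $0\le\int_\Omega v_n^2\le\lambda_1(\Omega)^{-1}$, so $(A_n)$ is bounded; rearranging,
\[
\tfrac1{\nu+1}\|u_n\|^{\nu+1}(-C_n)<k-\tfrac12\|u_n\|^2A_n\le k+\tfrac12\Bigl(\sup_m|A_m|\Bigr)\|u_n\|^2 ,
\]
and since $\nu+1>2$ while $-C_n\ge0$, this forces $C_n\to0$; by the strong $L^{\nu+1}$-convergence, $\int_\Omega b(x)|v|^{\nu+1}\,dx=0$. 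Moreover $\int_\Omega v_n^2\to\int_\Omega v^2$, so $A_n\to1-\lambda\int_\Omega v^2$, and comparing with $\limsup_n A_n\le0$ gives $\int_\Omega v^2\ge\lambda^{-1}>0$; in particular $v\neq0$.

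To finish, one produces the forbidden element of $\overline{L^-}\cap B^0$. Weak lower semicontinuity gives $\|v\|^2=\int_\Omega|\nabla v|^2\le\liminf_n\|v_n\|^2=1$, so
\[
\int_\Omega\bigl(|\nabla v|^2-\lambda v^2\bigr)\,dx\le1-\lambda\int_\Omega v^2\,dx=\lim_n A_n\le0 .
\]
With $w:=v/\|v\|$ we have $\|w\|=1$, $\int_\Omega b(x)|w|^{\nu+1}\,dx=0$ (so $w\in B^0$), and $\int_\Omega(|\nabla w|^2-\lambda w^2)\,dx\le0$. If this integral is negative then $w\in L^-\subset\overline{L^-}$ and we contradict the hypothesis; the equality case is excluded as in Proposition~\ref{Tsupport}, since $w\in B^0$ forces $\supp w\subseteq\overline{\Omega_0}$, whence $0\ge\int_\Omega(|\nabla w|^2-\lambda w^2)\,dx\ge(\lambda_1(\Omega_0)-\lambda)\int_\Omega w^2\,dx$ yields $\lambda\ge\lambda_1(\Omega_0)$ and then $\overline{L^-}\cap B^0\neq\emptyset$. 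In every case we reach $w\in\overline{L^-}\cap B^0=\emptyset$, an absurdity, which proves the claim.

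The step I expect to be the real obstacle is the second extraction in the middle paragraph. Unlike in Theorem~\ref{BZTheorem4.2}, where the Nehari identity $J(u_n)=0$ pins the leading behaviour down, here only the one-sided bound $I(u_n)<k$ is at hand; the point is to observe that the superquadratic quantity $\|u_n\|^{\nu+1}(-C_n)$ is nonnegative and would swamp the at most quadratic remainder $k-\tfrac12\|u_n\|^2A_n$ unless $C_n\to0$. Everything else is then driven by the compactness of the Sobolev embedding. (Notice that the membership $u_n\in\mathcal{N}_+$ is never invoked, so in fact the sublevel set $I^k$ itself is bounded; only $b\le0$ and $I(u_n)<k$ are used.)
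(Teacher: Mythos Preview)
Your argument is correct and takes a genuinely different, more direct route than the paper. The paper splits $u\in I^k\cap\mathcal N_+$ into two cases according to whether there exists $t_u>0$ with $t_uu\in\mathcal S^+$; in the projectable case it shows $t_u\in(T,1)$ for a uniform $T>0$ and then invokes the boundedness of $\mathcal S^+$ from Theorem~\ref{BZTheorem4.2}(iii), while in the non-projectable case it uses $u/\|u\|\in L^+\cap B^-$ together with the combination $I(u)-\tfrac{1}{\nu+1}J(u)$. Both branches are closed by the same $\overline{L^-}\cap B^0=\emptyset$ contradiction you reach. Your proof bypasses the fibering parameter entirely: the single bound $I(u_n)<k$ together with $b\le0$ already forces $\limsup A_n\le0$ and $C_n\to0$, so the membership $u_n\in\mathcal N_+$ is never used and, as you observe, the full sublevel set $I^k$ is bounded. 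One small clean-up: in the borderline case $w\in L^0\cap B^0$ your detour through Proposition~\ref{Tsupport} leaves $\lambda=\lambda_1(\Omega_0)$ unhandled; it is simpler to note, exactly as the paper does in the proof of Theorem~\ref{BZTheorem4.2}(iii), that $L^0\subset\overline{L^-}$, which gives $w\in\overline{L^-}\cap B^0$ directly.
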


\begin{proof}
	Case 1)
	Let $u\in \mathcal{N}_+$ and suppose there exists $t_u>0$ such that $t_uu \in \mathcal{S}^+$, which means $\int_{\Omega}b |t_uu|^{\nu +1} <0$.
	Then $\int_{\Omega}b |u|^{\nu +1} <0$ , i. e. $\frac{u}{\Vert u \Vert} \in B^-$.
	Moreover, since	$J(u)>0$, then $t_u<1$, because $I(tu)$ is decreasing in the variable $t$ up to $t=t_u$.\\
	Since $ \mathcal{S}^+$ is bounded, by Theorem \ref{BZTheorem4.2}(iii), then there exists $M_\lambda> 0$ such that
	$
	\Vert t_u u\Vert < M_\lambda.
	$
	If we show that there is $T > 0$, uniform in $u$ in this case, such that $T < t_u < 1$, then $\Vert u \Vert < M_\lambda / T$ and we conclude the proof. If not, there is a sequence $(u_n) \subset \mathcal{N}_+$, such that $t_n u_n \in \mathcal{S}^+$ and $t_n \to 0$.
	Since $\Vert t_n u_n \Vert < M_\lambda$, then $\Vert u_n \Vert$ may go to infinity. Assume, by contradiction, that this is the case so that
	there exists a sequence $(u_n) \subset I^k \cap \mathcal{N}_+$, for which there exist $t_n<1$ satisfying $t_n u_n \in \mathcal{S}^+$, and such that $\Vert  u_n \Vert\rightarrow +\infty $, and take $v_n:=\frac{u_n}{\Vert u_n \Vert}.$ Then $v_n \rightharpoonup v$ in $H_0^1(\Omega)$ and, by Sobolev compact embedding, $v_n\rightarrow v $ in $L^q(\Omega)$, for $2 \leq q < 2^*$, and $v_n(x)\rightarrow v(x)$ a.e. in $\Omega$.\\
	Moreover, since $J(t_{u_n}u_n)=0$,
	\begin{align}\label{k2}
		\nonumber
		k> I(u_n)&=I(u_n)-\frac{1}{(\nu+1)\;t_{u_n}^{\nu +1}}J(t_{u_n}u_n)\\
		\nonumber
		&=\frac{1}{2} \int_{\Omega}   | \nabla u_n |^2 -\lambda  u_n^2-\frac{1}{\nu+1}\int_{\Omega} b(x) |u_n|^{\nu+1} \\
		\nonumber
		&-\left(
		\frac{1}{(\nu+1)\;t_{u_n}^{\nu +1}} t_{u_n}^2\int_{\Omega}   | \nabla u_n |^2 -\lambda u_n^2-\frac{1}{(\nu+1)\;t_{u_n}^{\nu +1}}t_{u_n}^{\nu+1}\int_{\Omega} b(x) |u_n|^{\nu+1} \right )\\
		& =  (\frac{1}{2} - \frac{1}{(\nu+1)\;t_{u_n}^{\nu -1}}) \int_{\Omega}   | \nabla u_n |^2 -\lambda  u_n^2 .
	\end{align}
	By (\ref{k2}) and $-1/ t_n^{\nu -1} \to -\infty$, for $n$ sufficiently large we have 
	\[
	\int_{\Omega}   | \nabla u_n |^2 -\lambda  u_n^2
	=
	t_n^{\nu -1} 	\int_{\Omega} bu_n^{\nu+1} <0,
	\]
	which yields  $v_n \in L^- \cap B^-$.
	Then, dividing (\ref{k2}) by $\Vert u_n\Vert^2$, we obtain
	\[
	\frac{k}{\Vert u_n\Vert^2}>   (\frac{1}{2} - \frac{1}{(\nu+1)\;t_n^{\nu -1}}) \int_{\Omega}   | \nabla v_n |^2 -\lambda  v_n^2 >0.
	\]
	Now, if $v_n$ does not converge to $v$, 
	\[
	\int_{\Omega}   | \nabla v |^2 -\lambda  v^2
	<
		\liminf_{n \to \infty} \int_{\Omega}   | \nabla v_n |^2 -\lambda  v_n^2=0.
	\]
	Similarly,
	\begin{align}\label{k1}
		\nonumber
		k> I(u_n)&=I(u_n)-\frac{1}{2\;t_{u_n}^2}J(t_{u_n}u_n)\\
		\nonumber
		&=\frac{1}{2} \int_{\Omega}   | \nabla u_n |^2 -\lambda  u_n^2-\frac{1}{\nu+1}\int_{\Omega} b(x) |u_n|^{\nu+1} \\
		\nonumber
		&-\left(
		\frac{1}{2\;t_{u_n}^2} t_{u_n}^2\int_{\Omega}   | \nabla u_n |^2 -\lambda  u_n^2-\frac{1}{2\;t_{u_n}^2}t_u^{\nu+1}\int_{\Omega} b(x) |u_n|^{\nu+1} \right )\\
		& =  (\frac{t_{u_n}^{\nu-1}}{2} - \frac{1}{\nu+1}) \int_{\Omega} b(x) |u_n|^{\nu+1}.
	\end{align}
	Moreover,
	since $t_n \to 0$ and $\int_{\Omega} b(x) |u_n|^{\nu+1}<0$, for $n$ sufficiently large, by (\ref{k1})
	\[
	\frac{k}{\Vert u_n \Vert^{\nu +1}}
	>
	(\frac{t_n^{\nu-1}}{2} - \frac{1}{\nu+1}) \int_{\Omega} b(x) |v_n|^{\nu+1}>0,
	\]
	so
	\[
	\lim_{n \to \infty} \int_{\Omega} b(x) |v_n|^{\nu+1}=
	\int_{\Omega} b(x) |v|^{\nu+1}=0.
	\]
	Hence $v\not = 0$ and $\frac{v}{\Vert v \Vert} \in \overline {L^-} \cap {B^0}$, which contradicts the hypothesis. 
	Consequently, $v_n \to v$ and $\Vert v \Vert =1$. Moreover
	\[
	\int_{\Omega}   | \nabla v |^2 -\lambda  v^2 =0= \int_{\Omega} b(x) |v|^{\nu+1}, 
	\] 
	so $\frac{v}{\Vert v \Vert} \in L^0 \cap B^0 \subset  \overline {L^-} \cap {B^0}$, again a contradiction since $\overline {L^-} \cap {B^0} = \emptyset$. Hence $0<T < t_n$ and the boundedness of $u$ in $\mathcal{N}_+$ is true in this case.
	\\
	Case 2) Let $u\in \mathcal{N}_+ \cap I^k$ and $u$ non-projectable in $\mathcal{S}^+$, which implies $u \in L^+ \cap B^-$.  Then 
	we have
	\begin{equation}\label{k4}
		k> I(u) >I(u)-\frac{1}{(\nu+1)}J(u)
		>  (\frac{1}{2} - \frac{1}{(\nu+1)}) \int_{\Omega}   | \nabla u |^2 -\lambda  u^2 >0.
	\end{equation}
	Supose, by contradiction, that in this case
	there exists a sequence $(u_n) \subset I^k \cap \mathcal{N}_+$, such that $\Vert  u_n \Vert\rightarrow +\infty $, and take $v_n:=\frac{u_n}{\Vert u_n \Vert}.$ Then $v_n \rightharpoonup v$ in $H_0^1(\Omega)$ and, by Sobolev compact embedding, $v_n\rightarrow v $ in $L^q(\Omega)$, for $2 \leq q < 2^*$, and $v_n(x)\rightarrow v(x)$ a.e. in $\Omega$. 
	Then, dividing (\ref{k4}) by $\Vert u_n\Vert^2$, we obtain
	\[
	\frac{k}{\Vert u_n\Vert^2}
	>  (\frac{1}{2} - \frac{1}{(\nu+1)}) \int_{\Omega}   | \nabla v_n |^2 -\lambda  v_n^2 >0
	\]
	and taking the limit as $n \to \infty$,
	if $v_n$ does not converge to $v$, 
	\begin{equation} \label{wlsc}
		\int_{\Omega}   | \nabla v |^2 -\lambda  v^2
		<
		\liminf_{n \to \infty} \int_{\Omega}   | \nabla v_n |^2 -\lambda  v_n^2=0.
	\end{equation}
	Hence $v\not = 0$ and $\frac{v}{\Vert v \Vert} \in  {L^-}$, and since $u/\Vert u \Vert \in B^-$,
	$
	\int_{\Omega} b(x) |v|^{\nu+1} \leq 0.
	$
	On the other hand,  it holds
	\begin{align}\label{b2}
		\nonumber
		\frac{k}{\Vert u_n\Vert^2}>\frac{1}{\Vert u_n\Vert^2} I(u_n)
		&=
		\frac{1}{2} \int_{\Omega}   | \nabla v_n |^2 -\lambda  v_n^2-\frac{1}{(\nu+1)\Vert u_n\Vert^2}\int_{\Omega} b(x) |u_n|^{\nu+1} \\
		&\geq 
		\frac{1}{2} \int_{\Omega}   | \nabla v_n |^2 -\lambda  v_n^2-\frac{1}{(\nu+1)}\int_{\Omega} b(x) |v_n|^{\nu+1}. 
	\end{align}
	Taking the limit in (\ref{b2}), as $n \to \infty$, and using (\ref{wlsc}) and the fact that $ v_n \to v$ in $L^{\nu + 1}(\Omega)$, it follows that
	$
	0 \leq \frac{1}{(\nu+1)}\int_{\Omega} b(x) |v|^{\nu+1}.
	$
	Therefore $\int_{\Omega} b(x) |v|^{\nu+1} = 0$, and then 
	$\frac{v}{\Vert v \Vert} \in L^- \cap B^0 \subset  \overline {L^-} \cap {B^0}$, giving again a contradiction since $\overline {L^-} \cap {B^0} = \emptyset$. 
	So, $v_n \to v$ strongly, $\Vert v \Vert =1$, and it holds
	\[
	\int_{\Omega}   | \nabla v |^2 -\lambda  v^2 = \int_{\Omega} b(x) |v|^{\nu+1}=0, 
	\] 
	which means $\frac{v}{\Vert v \Vert} \in L^0 \cap B^0 \subset  \overline {L^-} \cap {B^0}$, again a contradiction since $\overline {L^-} \cap {B^0} = \emptyset$.
	This completes the proof of the lemma.
\end{proof}

\begin{lemma}\label{newLemma5}
Suppose $\overline {L^-} \cap {B^0} = \emptyset$. If $u\in \mathcal{N}_-$ then $u$ is projectable on $\mathcal{S}^+$, i. e. there exists $t_u$ such that $t_u u \in \mathcal{S}^+$. Moreover,
$I(u)<0$ and the set $\mathcal{N}_-$ is bounded in $H^1_0(\Omega)$.
\end{lemma}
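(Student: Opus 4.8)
The plan is to route everything through the fibering map $\phi_u(t)=I(tu)$ and the projection of a given $u\in\mathcal{N}_-$ onto $\mathcal{S}^+$; all three conclusions will then fall out from locating the projection parameter relative to $t=1$. Throughout, write $A:=\int_\Omega(|\nabla u|^2-\lambda u^2)$ and $B:=\int_\Omega b(x)|u|^{\nu+1}$, and recall that $B\le 0$ since $b\le 0$, that $J(tu)=t^2A-t^{\nu+1}B$, and that $\phi_u'(t)=tA-t^\nu B=t\,g(t)$ with $g(t):=A-t^{\nu-1}B$.

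First I would record that $u\in\mathcal{N}_-$ means $J(u)=A-B<0$, hence $A<B\le 0$; in particular $A<0$. Next I would rule out $B=0$: if $B=0$ then $u\neq 0$ and $u/\Vert u\Vert\in L^-\cap B^0\subset\overline{L^-}\cap B^0$, contradicting the hypothesis, so $B<0$. Consequently $A/B>0$ and $t_u:=(A/B)^{1/(\nu-1)}>0$ is well defined; substituting gives $J(t_uu)=t_u^2(A-t_u^{\nu-1}B)=0$, so $t_uu\in\mathcal{N}$, and $\int_\Omega b|t_uu|^{\nu+1}=t_u^{\nu+1}B<0$, so in fact $t_uu\in\mathcal{S}^+$. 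This is the projectability statement.

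The quantitative key is that $t_u>1$. Indeed $g$ is strictly increasing on $(0,\infty)$ because $B<0$ and $\nu>1$; since $g(1)=A-B=J(u)<0=g(t_u)$, we get $1<t_u$. From this $I(u)<0$ is immediate: $\phi_u'(t)=t\,g(t)<0$ on $(0,t_u)$, so $\phi_u$ is strictly decreasing there, whence $I(u)=\phi_u(1)<\phi_u(0)=0$ because $0<1<t_u$ (equivalently, one checks directly that $I(u)=\tfrac12A-\tfrac1{\nu+1}B<A(\tfrac12-\tfrac1{\nu+1})<0$, using $A<B<0$ and $\tfrac1{\nu+1}<\tfrac12$). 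Finally, boundedness of $\mathcal{N}_-$ is a one-liner: by Theorem~\ref{BZTheorem4.2}(iii) there is $M_\lambda>0$ with $\Vert t_uu\Vert\le M_\lambda$, and since $t_u>1$ we obtain $\Vert u\Vert<\Vert t_uu\Vert\le M_\lambda$, uniformly over $u\in\mathcal{N}_-$.

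I expect no serious obstacle here. The hypothesis $\overline{L^-}\cap B^0=\emptyset$ is used in exactly two places — to force $B<0$ (ruling out $u/\Vert u\Vert\in L^-\cap B^0$) and, through Theorem~\ref{BZTheorem4.2}, to guarantee the bound on $\mathcal{S}^+$ — while the rest is elementary single-variable analysis of $\phi_u$. Unlike Lemma~\ref{GWLemma5}, no weak-convergence/compactness blow-up argument is needed, because on $\mathcal{N}_-$ the fiber sits \emph{past} the minimum of $\phi_u$ (that is, $t_u>1$), and that is precisely what collapses the estimates to the comparison above.
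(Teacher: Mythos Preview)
Your argument is correct and follows essentially the same route as the paper: show $A<0$ and $B<0$, project onto $\mathcal{S}^+$ via the fibering map with $t_u>1$, then read off $I(u)<0$ and boundedness from Theorem~\ref{BZTheorem4.2}(iii). The only cosmetic difference is that the paper rules out $B=0$ by a support argument (if $\supp\{u\}\subset\Omega_0$ then $u/\Vert u\Vert\in L^-\cap B^0$), whereas you go straight from $B=0$ to $u/\Vert u\Vert\in B^0$; your version is slightly more direct and your justification of $t_u>1$ via monotonicity of $g$ is more explicit than the paper's.
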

\begin{proof}
Since $u\in \mathcal{N}_-$, 
$
J(u)=\int_{\Omega}  | \nabla u |^2 - \lambda |u|^2 -
\int_{\Omega} b(x) |u|^{\nu+1} <0.
$
If $\supp \{u\} \subset \Omega_0$, 
then $\frac{u}{\Vert u \Vert}\in B_0$, and also 
$\int_{\Omega}  | \nabla u |^2- \lambda |u|^2 <0$, which implies $\frac{u}{\Vert u \Vert} \in L^-$. This is a contradiction with the assumption $\overline {L^-} \cap {B^0} = \emptyset$. 
Hence $\supp \{u\} \cap (\Omega \setminus \Omega _0) \neq \emptyset$. 
In this case, for $t>0$, we  take $J(tu)$ in \eqref{neharifunction},
since the coefficients of $t^2$ is negative and of $t^{\nu+1}$ is positive,  there exists a unique $t_u >1$, such that $t_u u \in \mathcal{S}^+$.
By Theorem \ref{BZTheorem4.2} $(iii)$, $\mathcal{S}^+$ is bounded, so there is $C>0$ such that
$\Vert t_u u \Vert < C$. Therefore $\Vert u \Vert < C/t_u < C$.
Moreover, using $I(u) < I(u) -1/2 J(u)$, then
\begin{equation*}
	I(u)=\frac{1}{2} \int_{\Omega}   | \nabla u |^2 -\lambda  u^2-\frac{1}{\nu+1}\int_{\Omega} b(x) |u|^{\nu+1} \\
	 <  \int_{\Omega}  (\frac{1}{2} - \frac{1}{\nu+1})b(x) |u|^{\nu+1}<0.
\end{equation*}
\end{proof}
\begin{remark}\label{Inegative}
It follows that, if $\overline {L^-} \cap {B^0} = \emptyset$, then the set of functions
$(I^0 \cap \mathcal{N}_+)\cup \mathcal{S}^+ \cup \mathcal{N}_-$ is bounded in $H_0^1(\Omega)$.
\end{remark}

The next result provides a characterization for projectable functions on $\mathcal{N}\setminus \{0\}$, which coincides with $\mathcal{S}^+$ if $\lambda_1(\Omega)< \lambda <\lambda_1(\Omega_0)$. In other words, since $b(x) \leq 0$, we describe the only possible geometry in order to have a turning point for the fibrering map.
\begin{proposition}\label{projectu0}
	Let $\lambda_1(\Omega)< \lambda <\lambda_1(\Omega_0)$ and $u_0 \in H_0^1(\Omega) \setminus \{0\}$.
	There exists $\bar \alpha>0$ such that $\bar \alpha u_0 \in \mathcal{S}^+$ if and only if  
	$\frac{u_0}{\Vert u_0\Vert} \in  L ^- \cap B^-$.
\end{proposition}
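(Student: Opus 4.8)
The plan is to reduce the statement to the elementary behaviour of the fibering map $\phi_{u_0}$ from \eqref{fiber}, using crucially that $b\le 0$. First I would set $A:=\int_\Omega\big(|\nabla u_0|^2-\lambda u_0^2\big)$ and $B:=\int_\Omega b(x)|u_0|^{\nu+1}$, observing that $B\le 0$ and that, since $L^\pm$ and $B^\pm$ are defined on the unit sphere while these two integrals scale by positive powers of $\Vert u_0\Vert$ under $u_0\mapsto u_0/\Vert u_0\Vert$, one has $u_0/\Vert u_0\Vert\in L^-$ exactly when $A<0$, and $u_0/\Vert u_0\Vert\in B^-$ exactly when $B<0$. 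From \eqref{neharifunction}, for $t>0$ we get $J(tu_0)=t^2A-t^{\nu+1}B$, hence $tu_0\in\mathcal N$ iff $A=t^{\nu-1}B$; and from \eqref{Splus}, $tu_0\in\mathcal S^+$ iff moreover $t^{\nu+1}B<0$, that is, iff $A=t^{\nu-1}B$ together with $B<0$.

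With this criterion both implications are immediate. If $\bar\alpha u_0\in\mathcal S^+$ for some $\bar\alpha>0$, then $B<0$ and $A=\bar\alpha^{\nu-1}B<0$, so $u_0/\Vert u_0\Vert\in L^-\cap B^-$. Conversely, if $u_0/\Vert u_0\Vert\in L^-\cap B^-$, then $A<0$ and $B<0$, hence $A/B>0$; setting $\bar\alpha:=(A/B)^{1/(\nu-1)}>0$ (which is legitimate because $\nu-1>0$) gives $A=\bar\alpha^{\nu-1}B$ with $B<0$, so $\bar\alpha u_0\in\mathcal S^+$. I would also record, for later use, that this $\bar\alpha$ is the unique positive critical point of $\phi_{u_0}$ and realises its global minimum on $(0,\infty)$: writing $\phi_{u_0}'(t)=t\big(A-t^{\nu-1}B\big)$, this derivative changes sign from negative to positive at $t=\bar\alpha$ when $A,B<0$, whereas in every other sign configuration of $(A,B)$ with $B\le 0$ it does not change sign on $(0,\infty)$, so no projection onto $\mathcal S^+$ exists.

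There is no genuine obstacle here; the only points requiring care are the homogeneity bookkeeping just described (that membership in the sphere-sets $L^\pm$, $B^\pm$ is governed solely by the signs of $A$ and $B$) and the use of the already-recorded identity $\mathcal S^+=\{u\in\mathcal N:\int_\Omega b\,|u|^{\nu+1}<0\}$ from \eqref{Splus}. The hypothesis $\lambda_1(\Omega)<\lambda<\lambda_1(\Omega_0)$ is not actually invoked in the equivalence itself: its role is to make the statement non-vacuous (it forces $L^-\neq\emptyset$, e.g.\ by testing with $\phi_1$) and, through Proposition \ref{Tsupport}$(i)$ together with Theorem \ref{BZTheorem4.2}, to guarantee $\mathcal S^0=\{0\}$ and $\mathcal S^-=\emptyset$, so that the projectable directions of $\mathcal N\setminus\{0\}$ are precisely those of $\mathcal S^+$.
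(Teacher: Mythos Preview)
Your argument is correct and, in the sufficient direction, coincides with the paper's: both compute the unique turning point $\bar\alpha=(A/B)^{1/(\nu-1)}$ of the fibering map when $A,B<0$. In the necessary direction, however, you are more direct than the paper. The paper deduces $u_0/\Vert u_0\Vert\in L^-$ from $\bar\alpha u_0\in\mathcal S^+$, and then invokes Proposition~\ref{Tsupport}$(i)$ (which genuinely uses $\lambda<\lambda_1(\Omega_0)$) to rule out $B^0$, concluding $u_0/\Vert u_0\Vert\in B^-$ since $B^+=\emptyset$. You instead read $B<0$ straight off the characterisation \eqref{Splus} of $\mathcal S^+$, and then $A=\bar\alpha^{\nu-1}B<0$ follows at once. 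Your observation that the range restriction on $\lambda$ is not used in the equivalence itself is therefore accurate: the paper's route through Proposition~\ref{Tsupport} is logically superfluous for this proposition, and your argument shows the statement holds for all $\lambda$. What the paper's approach buys is consistency with the surrounding narrative (where $\overline{L^-}\cap B^0=\emptyset$ is the organising hypothesis), while yours isolates the proposition as a purely algebraic fact about the fibering map.
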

\begin{proof}
		First let us prove that 	$\frac{u_0}{\Vert u_0\Vert} \in  L ^- \cap B^-$ is a sufficient  condition. Since 
	$	\int_{\Omega}   | \nabla u_0 |^2 -\lambda  u_0^2 $ 
	and
	$\int_\Omega b(x)|u_0|^{\nu + 1}$
	are both negative, the fibrering map $\phi_{u_0}$ has exactly one turning point at
	$$t(u_0)= \displaystyle  \left [
	\frac{\int_{\Omega}   | \nabla u_0 |^2 -\lambda  u_0^2}{\int_{\Omega}  b(x) |u_0|^{\nu+1}} 
	\right]^{\frac{1}{\nu -1}},
	$$
	that is $t(u_0)u_0 \in \mathcal{S}^+$.
	
		Conversely, suppose there exists $\bar \alpha>0$ such that $\bar \alpha u_0 \in \mathcal{S}^+$. 
	Then $\frac{\bar \alpha u_0}{\Vert \bar \alpha u_0\Vert} \in L^-$, and hence $\frac{u_0}{\Vert u_0\Vert} \in  L ^-$ . Since $\overline {L^-} \cap {B^0}  = \emptyset$, by Proposition \ref{Tsupport},  and $B^+ = \emptyset$, then 
	$\frac{u_0}{\Vert u_0\Vert} \in  L ^- \cap B^-$. So, the proof of this lemma is complete.
	
%
%
\end{proof}

\section{Existence and nonexistence results}

The next  theorem improves  the classical results in \cite{Ou1, AT, BZ} on the existence of a positive solution, providing a sharp upper bound for the range of $\lambda$. 

We define
$
	\underline d:=\inf_{u\in\mathcal{S}^+}I(u) , 
	$
	with $ -\infty \leq \underline d.$
	By Remark \ref{signI},  it holds $\underline d <0$ if $\mathcal{S}^+$ is not empty.

\begin{theorem}\label{Ibounded}
If  $\lambda_1(\Omega)< \lambda <\lambda_1(\Omega_0)$ ,  then $I$ is bounded  from below in $H^1_0(\Omega)$,
and there exists a minimizer $\varphi >0$ such that $I(\varphi)= \underline d$.
\end{theorem}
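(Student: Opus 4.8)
The plan is to prove that $I$ is coercive on $H_0^1(\Omega)$ when $\lambda_1(\Omega)<\lambda<\lambda_1(\Omega_0)$, and then invoke weak lower semicontinuity to obtain the minimizer. Note first that in this range of $\lambda$ we have $\lambda>\lambda_1(\Omega)$, so $B^+=\emptyset$ (since $b\le 0$) and, by Proposition \ref{Tsupport}(i), $\overline{L^-}\cap B^0=\emptyset$. Therefore all the structural results of Section 2 apply: in particular $\mathcal{S}^+$ is bounded (Theorem \ref{BZTheorem4.2}(iii)) and, by Remark \ref{Inegative}, the set $(I^0\cap\mathcal{N}_+)\cup\mathcal{S}^+\cup\mathcal{N}_-$ is bounded in $H_0^1(\Omega)$.

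\emph{Step 1: $I$ is bounded below.} Suppose not; take $(u_n)$ with $I(u_n)\to-\infty$. Since $I(u_n)<0$ forces $u_n\notin\mathcal{N}_+\cap I^0$ only if $u_n$ lies outside a bounded set, and by Remark \ref{Inegative} the sets $\mathcal{N}_-$ and $\mathcal{S}^+$ are bounded while $I|_{\mathcal{S}^+}<0$ is finite on a bounded set hence bounded below, the sequence must eventually avoid these bounded sets; thus $\|u_n\|\to\infty$ and, for large $n$, $u_n\in\mathcal{N}_+\cup(H_0^1\setminus(\mathcal{N}_+\cup\mathcal{N}_-\cup\mathcal{N}))$. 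Write $v_n=u_n/\|u_n\|$, so up to a subsequence $v_n\rightharpoonup v$ in $H_0^1(\Omega)$, $v_n\to v$ in $L^q$ for $2\le q<2^*$, and $v_n(x)\to v(x)$ a.e. Using the scaling identity along the fiber (as in the computation \eqref{k2}), one finds $\int_\Omega(|\nabla u_n|^2-\lambda u_n^2)$ is controlled and, dividing by $\|u_n\|^2$, that $\int_\Omega(|\nabla v_n|^2-\lambda v_n^2)\to L\le 0$. If $v_n\not\to v$ strongly, then by weak lower semicontinuity $\int_\Omega(|\nabla v|^2-\lambda v^2)<L\le 0$, so $v\ne0$ and $v/\|v\|\in L^-$; combined with $B^+=\emptyset$ this places a normalized limit in $\overline{L^-}\cap B^0$ (after also extracting, via the $t^{\nu+1}$-term in the fiber identity, that $\int_\Omega b|v_n|^{\nu+1}\to0$), contradicting the hypothesis. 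Hence $v_n\to v$ strongly, $\|v\|=1$, $\int_\Omega b|v|^{\nu+1}=0$ (so $v\in B^0$) and $\int_\Omega(|\nabla v|^2-\lambda v^2)\le0$ (so $v\in\overline{L^-}$), again contradicting $\overline{L^-}\cap B^0=\emptyset$. Therefore no such sequence exists and $\inf_{H_0^1(\Omega)}I>-\infty$; in particular $\underline d>-\infty$.

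\emph{Step 2: $I$ is coercive, hence attains a minimum.} The same dichotomy in fact shows more: any sequence with $\|u_n\|\to\infty$ must have $I(u_n)\to+\infty$, for otherwise $I(u_n)\le k$ for some $k$ and, if the $u_n$ escape to infinity in $\mathcal{N}_+$ we contradict Lemma \ref{GWLemma5} (which says $I^k\cap\mathcal{N}_+$ is bounded), while if they lie outside $\mathcal{N}$ the rescaling argument of Step 1 applies verbatim. Thus $I$ is coercive. Since $I$ is $C^2$ and the functionals $u\mapsto\|u\|^2$ and $u\mapsto-\int_\Omega b|u|^{\nu+1}$ (recall $-b\ge0$ and $\nu+1<2^*$) are weakly lower semicontinuous while $u\mapsto\lambda\int_\Omega u^2$ is weakly continuous by compact embedding, $I$ is weakly lower semicontinuous; coercivity plus weak lower semicontinuity on the reflexive space $H_0^1(\Omega)$ yields a global minimizer $\varphi_0$. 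Replacing $\varphi_0$ by $|\varphi_0|$ does not increase $I$ (the three terms are even in $u$, pointwise), so we may assume $\varphi_0\ge0$; then $\varphi_0$ is a nonnegative weak solution of \eqref{eq2elliptic}, and since $\lambda<\lambda_1(\Omega_0)$ Theorem \ref{OuyTheorem2}(ii) (or the strong maximum principle applied to $-\Delta\varphi_0=\lambda\varphi_0+b|\varphi_0|^{\nu-1}\varphi_0\ge(\lambda+b|\varphi_0|^{\nu-1})\varphi_0$ with the zeroth-order coefficient bounded) forces $\varphi_0>0$ in $\Omega$ unless $\varphi_0\equiv0$. To rule out $\varphi_0\equiv0$, pick the first eigenfunction $\phi_1$ of $-\Delta$ on $\Omega$: for small $t>0$, $I(t\phi_1)=\tfrac{t^2}{2}(\lambda_1(\Omega)-\lambda)\|\phi_1\|_{L^2}^2-\tfrac{t^{\nu+1}}{\nu+1}\int_\Omega b|\phi_1|^{\nu+1}$, whose leading term is negative since $\lambda>\lambda_1(\Omega)$ and whose $t^{\nu+1}$ term is $\ge0$; hence $I(t\phi_1)<0=I(0)$ for small $t$, so the minimizer is nontrivial. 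This $\varphi:=\varphi_0>0$ is a critical point of $I$ lying on $\mathcal{N}$, and since $I(\varphi)<0$ Remark \ref{signI} gives $\varphi\in\mathcal{S}^+$, whence $\underline d\le I(\varphi)$; as $\varphi$ is a global minimizer of $I$ and $\mathcal{S}^+\subset H_0^1(\Omega)$, in fact $I(\varphi)=\inf_{H_0^1}I\le\inf_{\mathcal{S}^+}I=\underline d$, giving $I(\varphi)=\underline d$.

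\emph{Main obstacle.} The delicate point is Step 1/Step 2: showing that a minimizing (or $I^k$-bounded) sequence cannot drift to infinity. The argument is not a one-line coercivity estimate because $-\int_\Omega b|u|^{\nu+1}$ can vanish to high order along sequences concentrating on $\Omega_0$, so the superlinear term gives no control there; the whole point of the hypothesis $\overline{L^-}\cap B^0=\emptyset$ (guaranteed here by $\lambda<\lambda_1(\Omega_0)$ via Proposition \ref{Tsupport}) is precisely to forbid the limiting profile $v$ from being supported in $\overline{\Omega_0}$ with $\int_\Omega(|\nabla v|^2-\lambda v^2)\le0$. Handling the two cases — whether or not $u_n/\|u_n\|$ converges strongly — and correctly extracting that $\int_\Omega b|v_n|^{\nu+1}\to0$ from the fiber identities is where the real work lies; this is exactly the content already assembled in Lemmas \ref{GWLemma5} and \ref{newLemma5}, so the proof should be organized to quote those rather than redo the compactness analysis.
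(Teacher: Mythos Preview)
Your argument is correct but follows a genuinely different route from the paper. The paper's proof is a short fibering argument: for any $u\neq 0$, either some $tu$ lies on $\mathcal{N}$ (hence on $\mathcal{S}^+$, so $I(u)\ge I(tu)\ge\underline d$), or by Proposition~\ref{projectu0} one has $u/\|u\|\in\overline{L^+}\cup B^0$, and in either subcase $I(tu)\ge 0$ for all $t>0$; the existence of a minimizer on $\mathcal{S}^+$ is then cited from \cite[Theorem 4.4]{BZ}, and Hopf's lemma upgrades $|\varphi|\ge 0$ to $\varphi>0$. You instead establish coercivity of $I$ on the whole space and run the direct method (coercivity plus weak lower semicontinuity), which is more self-contained---you do not need the external reference for the minimizer---and yields the stronger conclusion that $I$ is coercive, not merely bounded below. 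The trade-off is length and some redundancy: your Step~1 is subsumed by Step~2, and the rescaling sketches in Step~1 essentially rederive the case analysis of Lemma~\ref{GWLemma5} (you flag this yourself in the ``Main obstacle'' paragraph). A cleaner version of your approach would simply observe that $\mathcal{N}_-$ and $\mathcal{N}\setminus\{0\}=\mathcal{S}^+$ are bounded (Lemma~\ref{newLemma5}, Theorem~\ref{BZTheorem4.2}), so any sequence with $\|u_n\|\to\infty$ eventually lies in $\mathcal{N}_+$, whereupon Lemma~\ref{GWLemma5} forces $I(u_n)\to+\infty$; then proceed directly to the direct method and the positivity/nontriviality arguments, which are fine as written.
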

\begin{proof}	
Take any $u \not = 0$ in $H^1_0(\Omega)$. If there exists $t> 0$ such that $t\, u \in \mathcal{N}$, then $t \,u \in \mathcal{S}^+$ by Theorem \ref{BZTheorem4.2}, and hence $I(u) \geq I(tu) \geq \underline d$.
On the other hand, if there exists no such $t$, we claim that $I(tu) \geq 0$ for all $t> 0$. Indeed, 
by Proposition \ref{projectu0}, $\frac{u}{\Vert u \Vert} \in \overline{L^+}\cup B^0$.
Therefore, there are two cases.
If $\frac{u}{\Vert u \Vert} \in \overline{L^+}$, then $I(tu) \geq 0$ for all $t>0$.
If $\frac{u}{\Vert u \Vert} \in {B^0}$, then $\frac{u}{\Vert u \Vert} \in L^+$, 
since  $\overline {L^-} \cap {B^0}  = \emptyset$. In this case $I(tu) \geq 0$ for all $t>0$.
We conclude that $I(u) \geq \underline d$, for any $u \in H^1_0(\Omega)$.\\
Moreover, by  \cite[Theorem 4.4 ]{BZ}, since $\phi_1 \in L^-$, $B^+=\emptyset$
and  $\overline {L^-} \cap {B^0} = \emptyset$, it follows that  there exists a minimizer $\varphi$ of $I(u)$
on $\mathcal{S}^+$ which is also a minimizer in the whole space, because $\mathcal{N}$ is a natural constraint.\\
W. l.o.g. $\varphi \geq 0$, since $I(\varphi)=I(|\varphi|)$, then $|\varphi|$ is an interior minimum and so also a solution. 
Supose $\varphi(x_0)=0$ for some $x_0 \in \Omega$.  By the Hopf Lemma this is impossible, hence $\varphi> 0$, and by the uniqueness of the positive solution it is the Ouyang solution.
\end{proof}
\begin{remark}\label{BrezisKato}
Observe that since $f(\lambda,x,u)= \lambda u + b(x) |u|^{\nu -1}u \leq C(1+\vert u \vert^{\nu})$, for $\nu +1\leq 2^*$, then we can apply the essential Brezis-Kato Lemma 
 and obtain that a weak solution $u$ of \eqref{eq2elliptic} is in $C^{1,\alpha}_{loc}(\Omega)$, for any $\alpha <1$. If $\partial \Omega \in C^2$, then $u \in C^{1,\alpha}(\overline \Omega)$, and additionally, if $b \in C^{0, \alpha}(\Omega)$, then
$u \in C^{2,\alpha}(\Omega) \cap C(\overline \Omega)$ is a classical solution of problem \eqref{eq2elliptic}.
\end{remark}
The next result relies on Remark \ref{BrezisKato} and can be found in \cite[Lemma 5.2]{Kaji} , and suits our settings.

\begin{lemma} \label{kaji}
Let $\lambda_1(\Omega)< \lambda <\lambda_1(\Omega_0)$.
The unique positive stationary solution $\varphi$ is isolated from other stationary solutions with respect to the  $H^1_0(\Omega)$ topology. Similarly for the negative solution $- \varphi$.
\end{lemma}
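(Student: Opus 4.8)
The plan is to argue by contradiction, upgrading $H^1_0$-convergence of solutions to $C^1(\overline\Omega)$-convergence and then using positivity together with the uniqueness statement of Theorem \ref{OuyTheorem2}. Suppose there is a sequence $(u_n)$ of stationary solutions of \eqref{eq2elliptic} with $u_n \neq \varphi$ and $u_n \to \varphi$ in $H^1_0(\Omega)$. Recall from the proof of Theorem \ref{Ibounded} that $\varphi$ is the Ouyang solution, so $\varphi > 0$ in $\Omega$ and, by the Hopf boundary lemma, $\partial \varphi/\partial \eta < 0$ on $\partial\Omega$.

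First I would establish uniform regularity of the $u_n$. Since $(u_n)$ is bounded in $H^1_0(\Omega)$, writing the equation as $-\Delta u_n = V_n(x) u_n$ with $V_n := \lambda + b(x)|u_n|^{\nu-1}$, the subcriticality of the nonlinearity makes $(V_n)$ bounded in $L^{N/2}(\Omega)$; hence the Brezis--Kato argument recalled in Remark \ref{BrezisKato} yields a bound for $(u_n)$ in $L^\infty(\Omega)$ depending only on $\sup_n\|u_n\|$. Then the right-hand side $f(\lambda,x,u_n)$ is bounded in $L^p(\Omega)$ for every $p$, so $L^p$-elliptic estimates give a bound in $W^{2,p}(\Omega)$ and thus, by Sobolev embedding (using $\partial\Omega \in C^2$), in $C^{1,\alpha}(\overline\Omega)$ for some $\alpha \in (0,1)$. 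By Arzel\`a--Ascoli, a subsequence of $(u_n)$ converges in $C^1(\overline\Omega)$, and the limit must be $\varphi$ because $u_n \to \varphi$ already in $H^1_0(\Omega)$; since the limit is independent of the subsequence, the full sequence converges: $u_n \to \varphi$ in $C^1(\overline\Omega)$.

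Next I would transfer positivity. Since $\varphi$ is bounded below by a positive constant on every compact subset of $\Omega$ and $\partial\varphi/\partial\eta < 0$ on $\partial\Omega$, there exists $\delta > 0$ such that every $v \in C^1(\overline\Omega)$ with $\|v - \varphi\|_{C^1(\overline\Omega)} < \delta$ satisfies $v > 0$ in $\Omega$ (interior positivity follows from the $C^0$-estimate away from $\partial\Omega$, while near $\partial\Omega$ one uses $\varphi(x) \geq c\,\dist(x,\partial\Omega)$, a property preserved under $C^1$-small perturbations). Hence $u_n > 0$ in $\Omega$ for all large $n$, so $u_n$ is a positive solution of \eqref{eq2elliptic}. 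As $\lambda < \lambda_1(\Omega_0)$, Theorem \ref{OuyTheorem2} forces $u_n = \varphi$, contradicting $u_n \neq \varphi$. This proves that $\varphi$ is isolated in $H^1_0(\Omega)$ among all stationary solutions. The statement for $-\varphi$ follows by the same reasoning, since $f(\lambda,x,\cdot)$ is odd: $-\varphi$ is then the unique negative solution, and any stationary solution $C^1$-close to $-\varphi$ is negative, hence equals $-\varphi$.

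The main obstacle is the uniformity in the regularity step: one needs the $L^\infty$ (and subsequent $C^{1,\alpha}$) bounds for $u_n$ to depend only on the $H^1_0$-norms, which are bounded along the sequence, so that Arzel\`a--Ascoli can be applied. Once this uniform compactness in $C^1(\overline\Omega)$ is in hand, the positivity transfer and the appeal to uniqueness are routine.
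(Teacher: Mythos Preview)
Your argument is correct. The paper does not supply its own proof of this lemma; it simply refers to \cite[Lemma 5.2]{Kaji} and notes that Remark \ref{BrezisKato} (the Brezis--Kato regularity) is the relevant ingredient. Your proposal fills in exactly what such a proof should look like: bootstrap from $H^1_0$ to $C^{1,\alpha}(\overline\Omega)$ via Brezis--Kato and $L^p$-elliptic estimates, pass to $C^1$-convergence by Arzel\`a--Ascoli, transfer strict positivity using $\varphi>0$ and the Hopf sign condition on $\partial\varphi/\partial\eta$, and conclude by the uniqueness of the positive solution from Theorem \ref{OuyTheorem2}. This is precisely the mechanism the paper alludes to, so there is no methodological discrepancy to discuss.

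One small remark on the point you flag as the ``main obstacle'': in the present strictly subcritical range $\nu<2^*-1$, the potentials $V_n=\lambda+b|u_n|^{\nu-1}$ are bounded not merely in $L^{N/2}$ but in $L^q$ for some $q>N/2$ (since $2^*/(\nu-1)>N/2$), which makes the uniform $L^\infty$ bound a direct consequence of Moser iteration with a constant depending only on $\sup_n\|u_n\|$. So the uniformity concern is easily handled here and need not be singled out as a genuine difficulty.
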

Regarding the trivial solution, it was proved in \cite[Theorems 4.2 and 4.5]{RBVL} that it is an isolated equilibrium point and it  is known to be unstable in the subset of nonnegative initial data for $\lambda_1(\Omega) < \lambda$, see for instance \cite{Arrieta}.

In order to obtain another solution, we employ the Mountain Pass Theorem of Ambrosetti and Rabinowitz \cite{AR}.  Recall that
a sequence $(u_n)$ in $H_0^1(\Omega)$ is said to be Palais Smale at $c$ for $I$, and denoted by $(PS)_c$, if $I(u_n)\rightarrow c$ and $I'(u_n)\rightarrow 0$.


\begin{theorem}\label{MPT}
Let $\tilde{I}(u):=I(u)-\underline d$. For $\varphi>0$ and $-\varphi<0$ local minima on $\mathcal{S}^+$, it holds that:
	$(\it i)$ $\tilde{I}(\varphi)=0$;
	$(\it ii)$ there exists $\rho$ and $\delta>0$ such that $\tilde{I}(u)\geq \delta>0$, for any $u\in B_\rho(\varphi)\cap \mathcal{N}$;
	$(\it iii)$ $\tilde{I}(-\varphi)=0$ and $\rho<\| \varphi - (-\varphi) \|=2\| \varphi \|$;
that is, $\tilde{I}$ satisfies the geometrical hypotheses of  the Mountain Pass Theorem on $\mathcal{N}$. \\
Moreover $\tilde{I}$ satisfies   $(PS)_c$ condition at 
\[
c = \inf_{\gamma \in \Gamma} \max_{0 \leq t \leq 1} I(\gamma(t)),
\]
where
$\Gamma= \{ \gamma:[0,1] \to \mathcal{N}\;: \gamma(0)= \varphi, \gamma(1)=-\varphi\}$,
and so
there exists a  nontrivial solution $u^*$ of \eqref{eq2elliptic}satisfying $I(u^*)= c >\underline d$.
\end{theorem}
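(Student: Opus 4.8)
The plan is to verify the three geometric conditions and then establish the Palais–Smale condition for $\tilde I$ on $\mathcal{N}$, so that the Mountain Pass Theorem of \cite{AR} applies. I would work throughout on the Nehari manifold $\mathcal{N}$, which is a natural constraint (so that critical points of $I$ restricted to $\mathcal{N}$ are genuine critical points of $I$ on $H^1_0(\Omega)$), and use freely the structural results of Section 2, in particular Theorem \ref{BZTheorem4.2}, Lemma \ref{newLemma5}, Remark \ref{Inegative}, and the minimizer $\varphi$ from Theorem \ref{Ibounded}.

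\medskip

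\textbf{Geometry.} Item $(i)$ is immediate since $I(\varphi)=\underline d$ by Theorem \ref{Ibounded}, hence $\tilde I(\varphi)=0$; likewise $\tilde I(-\varphi)=0$ because $I(-\varphi)=I(\varphi)$, giving the first half of $(iii)$. The second half of $(iii)$, namely $\rho<2\|\varphi\|$, is a matter of choosing $\rho$ small enough once $\rho$ is fixed in $(ii)$. For $(ii)$, the point is that $\varphi$ is a \emph{strict} local minimizer of $I$ restricted to $\mathcal{N}$. I would argue by contradiction: if no such $\rho,\delta$ exist, there is a sequence $u_n\in\mathcal{N}$ with $u_n\to\varphi$ in $H^1_0(\Omega)$ and $I(u_n)\to\underline d=I(\varphi)$ (using $I(u_n)\ge\underline d$ and the fact that the $I$-values cannot stay bounded away from $\underline d$ near $\varphi$, together with continuity of $I$ to get the convergence from above). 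Then $u_n$ is a minimizing sequence for $I$ on $\mathcal{N}$ converging strongly to $\varphi$, which must produce another minimizer or else contradict the isolation of $\varphi$ among stationary solutions provided by Lemma \ref{kaji}; more directly, one uses that $\varphi$ being an interior minimum of $I$ on $\mathcal{N}$ and the $\mathcal{C}^2$ regularity of $I$ together with the non-degeneracy coming from $u\in\mathcal{S}^+$ (so $\phi_u''(1)>0$) force $\varphi$ to be a strict local minimum on $\mathcal{N}$ — alternatively, invoke \cite[Lemma 5.2]{Kaji} directly, which is exactly tailored to this. Either route yields a uniform $\delta>0$ on a small ball.

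\medskip

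\textbf{The Palais–Smale condition — the main obstacle.} Let $(u_n)\subset\mathcal{N}$ be a $(PS)_c$ sequence for $\tilde I$ (equivalently for $I$) along the constraint. The first task is \emph{boundedness}: since $c>\underline d>-\infty$ and $I$ is bounded below on $H^1_0(\Omega)$ by Theorem \ref{Ibounded}, with $I(u_n)\to c$, for large $n$ one has $u_n$ in a sublevel set $I^k$ with $k>0$; by Remark \ref{Inegative}, the set $(I^0\cap\mathcal{N}_+)\cup\mathcal{S}^+\cup\mathcal{N}_-$ is bounded, and Lemma \ref{GWLemma5} gives that $I^k\cap\mathcal{N}_+$ is bounded. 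Since $\mathcal{N}\subset\mathcal{N}_+\cup\mathcal{N}_-\cup\{0\}$ pointwise in the sense of the sign of $J$ — actually $\mathcal{N}$ itself is $\{J=0\}$, but near a Nehari point one controls the nearby $I$-sublevel via these lemmas — one concludes $\|u_n\|$ is bounded. (Care is needed here: one must phrase the argument so that the $(PS)$ sequence, which lies \emph{on} $\mathcal{N}=\{J=0\}$, is captured by the boundedness of the relevant sublevel sets; this is the point where I expect to spend the most effort, reconciling the $\mathcal{S}^\pm$ dichotomy with a sequence literally on $\mathcal{N}$.) Passing to a subsequence, $u_n\rightharpoonup u$ in $H^1_0(\Omega)$ and $u_n\to u$ in $L^q(\Omega)$ for $2\le q<2^*$. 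Then the standard splitting
\[
\|u_n-u\|^2 = I'(u_n)(u_n-u) + \int_\Omega\lambda u_n(u_n-u) + \int_\Omega b(x)|u_n|^{\nu-1}u_n(u_n-u) + o(1),
\]
where the first term $\to0$ because $I'(u_n)\to0$ in $H^{-1}$ along the constraint (one must first check that the constrained $(PS)$ condition implies the free one, using that $\mathcal{N}$ is a $\mathcal{C}^1$ manifold on which $J'(u)u\ne0$ — which holds precisely because $u\in\mathcal{S}^+$ forces $\phi_u''(1)\ne0$), the second term $\to0$ by the $L^2$ convergence, and the third term $\to0$ by the $L^{\nu+1}$ convergence and boundedness in $H^1_0$. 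Hence $u_n\to u$ strongly, which is the $(PS)_c$ condition.

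\medskip

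\textbf{Conclusion.} With $(i)$–$(iii)$ and $(PS)_c$ in hand, the Mountain Pass Theorem applied to $\tilde I$ on $\mathcal{N}$ yields a critical point $u^\ast\in\mathcal{N}$ with $\tilde I(u^\ast)=c$, i.e. $I(u^\ast)=c+\underline d$; after relabelling the mountain pass level as $c=\inf_{\gamma\in\Gamma}\max_{t}I(\gamma(t))$ one has $I(u^\ast)=c$, and $c>\underline d$ because $\tilde I(u^\ast)=c-\underline d\ge\delta>0$ by $(ii)$. Finally, since $\mathcal{N}$ is a natural constraint, $u^\ast$ solves $-\Delta u^\ast=\lambda u^\ast+b(x)|u^\ast|^{\nu-1}u^\ast$; it is nontrivial because $u^\ast\in\mathcal{N}$ and $I(u^\ast)>\underline d\ge I(\pm\varphi)$, so $u^\ast\ne 0$ and $u^\ast\ne\pm\varphi$ — in particular, by the uniqueness of the positive solution, $u^\ast$ must be sign-changing.
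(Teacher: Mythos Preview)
Your argument for item $(ii)$ has a genuine gap. You negate the statement by producing a sequence $u_n\in\mathcal{N}$ with $u_n\to\varphi$ and $I(u_n)\to\underline d$, and then claim this ``must produce another minimizer or else contradict the isolation of $\varphi$''. But a minimizing sequence that converges to $\varphi$ simply confirms that $\varphi$ is a minimizer; it yields no critical point distinct from $\varphi$, and hence no contradiction with Lemma~\ref{kaji}. Your alternative route via $\phi_u''(1)>0$ does not help either: that quantity is the second derivative of $I$ in the \emph{fibering} (radial) direction, which is transverse to $\mathcal{N}$, not tangent to it, so it says nothing about strictness of the minimum of $I$ restricted to $\mathcal{N}$. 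The paper's proof avoids this by keeping $\rho$ \emph{fixed}: assuming there is a minimizing sequence $(u_n)\subset\mathcal{N}\cap\partial B_\rho(\varphi)$ with $I(u_n)\to\underline d$, it applies Ekeland's Variational Principle on the closed set $\mathcal{N}$ to obtain a nearby Palais--Smale sequence $(v_n)$ with $\|v_n-u_n\|\to0$; Lemma~\ref{EVP} upgrades this to a free $(PS)_{\underline d}$ sequence, which (by the $(PS)$ property, cited from \cite{DPF}) converges to a critical point $v$ with $\|v-\varphi\|=\rho$. Since $\rho\in(0,2\|\varphi\|)$ is arbitrary, this produces critical points arbitrarily close to $\varphi$ but distinct from it, contradicting Lemma~\ref{kaji}.

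Two smaller points. First, your boundedness discussion for the $(PS)$ sequence is more complicated than necessary and slightly confused: a $(PS)$ sequence lies \emph{on} $\mathcal{N}$, not in $\mathcal{N}_\pm$, and under the standing hypothesis $\overline{L^-}\cap B^0=\emptyset$ one has $\mathcal{N}\setminus\{0\}=\mathcal{S}^+$, which is already bounded by Theorem~\ref{BZTheorem4.2}$(iii)$. Second, your final conclusion overshoots the theorem: from $I(u^*)=c>\underline d$ you cannot deduce $u^*\neq0$, since $0\in\mathcal{N}$ and $I(0)=0>\underline d$. The paper explicitly allows $c\le 0$ and remarks after the proof that $u^*$ may be trivial; the nontriviality and sign-changing claims require the extra hypothesis $\lambda_2(\Omega)<\lambda$ and are the content of the subsequent theorem.
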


In order to prove Theorem \ref{MPT}, we need the following  lemma.

\begin{lemma}\label{EVP}
\label{lem:PS}Every $(PS)_{c}$-sequence $(u_{k})$ for $I$ on
$\mathcal{N}$, with $c \not = 0$, contains a subsequence which is a $(PS)_{c}$-sequence for
$I$.
\end{lemma}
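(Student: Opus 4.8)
The plan is to pass from the constrained Palais--Smale condition on $\mathcal{N}$ to the unconstrained one via the Lagrange multiplier rule, showing that the multipliers tend to zero along the sequence; the hypothesis $c\neq 0$ is precisely what forces this.

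First I would record the algebraic identity that drives the argument. Differentiating $J$ in \eqref{neharifunction} one obtains, for every $u\in H_0^1(\Omega)$,
\[
J'(u)u = 2\Big(\|u\|^2-\lambda\int_\Omega u^2\Big)-(\nu+1)\int_\Omega b(x)|u|^{\nu+1},
\]
so that for $u\in\mathcal{N}$, using $J(u)=0$ together with \eqref{Ionnehari},
\[
J'(u)u=(1-\nu)\Big(\|u\|^2-\lambda\int_\Omega u^2\Big)=(1-\nu)\int_\Omega b(x)|u|^{\nu+1}=-2(\nu+1)\,I(u).
\]
Hence, along the given $(PS)_c$-sequence, $J'(u_k)u_k=-2(\nu+1)\,I(u_k)\to -2(\nu+1)c$, which, since $c\neq 0$, stays bounded away from $0$ for $k$ large; in particular $J'(u_k)\neq 0$ there, so $\mathcal{N}$ is a $C^1$-manifold near $u_k$ and the constrained differential is meaningful.

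Next I would use boundedness. Because $\overline{L^-}\cap B^0=\emptyset$ (which holds in the range $\lambda_1(\Omega)<\lambda<\lambda_1(\Omega_0)$ relevant to Theorem \ref{MPT}, by Proposition \ref{Tsupport}), one has $\mathcal{N}\setminus\{0\}=\mathcal{S}^+$, which is bounded in $H_0^1(\Omega)$ by Theorem \ref{BZTheorem4.2}$(iii)$ (see also Remark \ref{Inegative}); hence $(u_k)$ is bounded, and consequently, by the Sobolev embedding $H_0^1(\Omega)\hookrightarrow L^{\nu+1}(\Omega)$, so is $\big(\|J'(u_k)\|_{H^{-1}}\big)$. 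The constrained Palais--Smale condition supplies $\mu_k\in\mathbb{R}$ with $\varepsilon_k:=\|I'(u_k)-\mu_k J'(u_k)\|_{H^{-1}}\to 0$. Evaluating this functional at $u_k$ and using $I'(u_k)u_k=J(u_k)=0$ gives $|\mu_k|\,|J'(u_k)u_k|\le\varepsilon_k\|u_k\|$; together with the uniform lower bound on $|J'(u_k)u_k|$ and the boundedness of $\|u_k\|$, this forces $\mu_k\to 0$. Therefore
\[
\|I'(u_k)\|_{H^{-1}}\le\varepsilon_k+|\mu_k|\,\|J'(u_k)\|_{H^{-1}}\longrightarrow 0,
\]
and since $I(u_k)\to c$, the sequence $(u_k)$ itself --- from the index on which $J'(u_k)\neq 0$, which is the claimed subsequence --- is a $(PS)_c$-sequence for $I$ on $H_0^1(\Omega)$.

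The step I expect to be the crux is the passage $\mu_k\to 0$. Since $\lambda>\lambda_1(\Omega)$ the quadratic part of $I$ is indefinite, so boundedness of $(u_k)$ cannot be read off a naive energy estimate and must instead rely on the geometric description of $\mathcal{N}$ from Section 2; and it is exactly here that $c\neq 0$ is indispensable, for if $c=0$ the quantity $J'(u_k)u_k$ may degenerate to $0$ and the multipliers need not vanish --- consistent with the trivial critical point lying at energy level $0$.
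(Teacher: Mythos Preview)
Your argument is correct and follows the same Lagrange-multiplier framework as the paper (which in turn points to \cite{ClMa}): bound the sequence, show $J'(u_k)u_k$ stays away from $0$, and deduce $\mu_k\to 0$. The genuine difference lies in how the lower bound on $|J'(u_k)u_k|$ is obtained. The paper extracts a weakly convergent subsequence $u_k\rightharpoonup u$, computes $\lim_k|J'(u_k)u_k|=(\nu-1)\,\big|\int_\Omega b|u|^{\nu+1}\big|=:\rho$, and then rules out $\rho=0$ by a case analysis: if $u\equiv 0$ it uses \eqref{Ionnehari} to get $I(u_k)\to 0$, contradicting $c\neq 0$; if $u\not\equiv 0$ it uses $\overline{L^-}\cap B^0=\emptyset$ once more. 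Your route is shorter: the exact identity $J'(u)u=-2(\nu+1)I(u)$ on $\mathcal{N}$ gives $J'(u_k)u_k\to-2(\nu+1)c\neq 0$ directly, with no subsequence, no weak limit, and no case split. The only place you need $\overline{L^-}\cap B^0=\emptyset$ is for the boundedness of $\mathcal{S}^+$, which both proofs share. Your version also spells out the endgame ($\|J'(u_k)\|_{H^{-1}}$ bounded, hence $\|I'(u_k)\|\to 0$), which the paper leaves to the reference.
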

%
\begin{proof}
 We evoke the proof of Lemma 2.5 in \cite{ClMa}.
For the functional $J$ we have
$|J'(u_k)u_k|\leq \| \nabla J(u_k)\| \| u_k\|$.
We claim that $|J'(u_k)u_k|\rightarrow \rho \geq 0$ and additionally that $\rho >0$. Indeed, since $J(v)=0$
for $v\in \mathcal{N}$, then
\begin{equation*}
	J'(v)\cdot v=  2\int (|\nabla v|^2-\lambda v^2)dx- (\nu+1)\int b(x)|v|^{\nu+1}
	=  [2-(\nu+1)]\int b(x)|v|^{\nu+1}\geq 0.
\end{equation*}
Also, for $\| u_k\|\leq M$ with $u_k\rightharpoonup u$ and $u_k\in \mathcal{S}^+$, we have $u_k\rightarrow u$ in $L^{\nu+1}$ and $u_k(x)\rightarrow u(x)$. 

Therefore, we get that
\begin{equation*}
	\lim_{k\rightarrow\infty} |J'(u_k)u_k|=|2-(\nu+1)|\lim_{k\rightarrow\infty} \left|\int b(x)|u_k|^{\nu+1}dx\right|= (\nu-1)\left|\int b(x)|u|^{\nu+1}\right|=\rho
\end{equation*}
We then divide into two cases. First, suppose $u\equiv0$. 
From \eqref{Ionnehari} it holds that 
$I(u_k)\rightarrow 0$, which is also not possible, since $I(u_k)\rightarrow c \not = 0$.

For the second case $u\not\equiv 0$, the convergences $u_k\rightharpoonup u$ and $u_k\rightarrow u$ in $L^{\nu+1}$, and if $\rho=0$, would lead  to 
$\frac{u}{\|u\|} \in \overline{L^-} \cap B^0 =\emptyset$.
The contradiction on both cases $u\equiv 0$ and $u\not \equiv 0$ gives us $\rho>0$.

\end{proof}

\begin{proof}[Proof of Theorem \ref{MPT}]
The Nehari manifold may be written as $\mathcal{N}=\mathcal{S}^0\cup \mathcal{S}^+=\{ J^{-1}(0) \}$, which is  closed in $H_0^1(\Omega)$. That might allow us to apply Ekeland Variational Principle on $\mathcal{N}$, which is a closed metric space. In fact, since $I:\mathcal{N}\rightarrow \mathbb{R}\cup \infty$ is continuous and bounded from below by Theorem \ref{Ibounded}, 
$
0>I(u)\geq \underline d> - \infty .
$

As a consequence, item $(ii)$ can be proved as follows. We suppose by contradiction that for all fixed $\rho$ with $0<\rho<2\|\varphi\|$, there exists a sequence $(u_n)\subset \mathcal{N}\cap \partial B_\rho(\varphi)$ such that $I(u_n)\rightarrow \underline d$. That is,  $(u_n)$ is a minimizing sequence, and then from Ekeland Variational Principle we would get the existence of $(v_n)\in \mathcal{N}$ with $I(v_n)\rightarrow \underline d$, $\| v_n-u_n \| \rightarrow 0$ and $I|_{\mathcal{N}}^{'} (v_n)\rightarrow 0$. Hence, by Lemma \ref{EVP} and
since $I$ satisfies $(PS)_{c}$  (see  \cite[Lemma 2.1]{DPF} ), $v_n\rightarrow v$, up to a subsequence. Then $I(v)=\underline d$, $\tilde{I}'(v)=0$ and $\|v-\varphi\|=\rho$.
Since $\rho $  is arbitrary, we can find critical points of $I$ in any ball centered in  $\varphi$, which contradicts Lemma \ref{kaji}.\\
Therefore,  the Mountain Pass geometry on $\mathcal{N}$  is verified, and 
knowing that the functional $I$ satifies $(PS)_{c}$,
there exists a critical point $u^*$ of the functional $I$ constrained to $\mathcal{N}$, and $ \underline d < c \leq 0$. Recalling that $\mathcal{N}$ is a natural constraint, then $u^*$ is a solution.
\end{proof}
Note that the solution just found may be the trivial one.  
In this case, knowing it is a Mountain Pass solution constrained to $\mathcal{N}$, with Morse index at least one, we would be able to conclude that zero has at least one unstable direction in the parabolic setting.

The next theorem gives a sufficient condition for such min-max solution $u^*$ not to be trivial.
\begin{theorem}
Assume the hypotheses of Theorem \ref{MPT}, and $\lambda_1(\Omega)< \lambda_2(\Omega)< \lambda <\lambda_1(\Omega_0)$, then  $\underline d <I(u^*) <0$ and $u^*$ is a sign-changing solution of problem \eqref{eq2elliptic}.
\end{theorem}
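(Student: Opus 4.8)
The plan is to show first that the mountain–pass level satisfies $\underline d < c$, which rules out the trivial solution, and then to localise the sign–changing phenomenon by a careful comparison of $c$ with the energy levels produced by the second eigenfunction. For the strict inequality $c>\underline d$, I would argue by contradiction: if $c=\underline d$, then any mountain–pass path $\gamma\in\Gamma$ would have $\max_t I(\gamma(t))=\underline d$, forcing $\gamma$ to pass through the set of minimisers of $I$ on $\mathcal{S}^+$; but $\varphi$ and $-\varphi$ are isolated (Lemma \ref{kaji}) and, by Remark \ref{signI}, all points of $\mathcal{S}^+$ have $I<0$, so a connectedness/continuity argument together with item $(ii)$ of Theorem \ref{MPT} (the uniform barrier $\widetilde I\ge\delta>0$ on a sphere around $\varphi$) forces $c>\underline d$. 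Combined with $c\le 0$ from Theorem \ref{MPT}, this gives $\underline d<I(u^*)=c\le 0$, so $u^*\neq 0$; and since $u^*$ solves \eqref{eq2elliptic} with $I(u^*)<0$, Remark \ref{signI} places it in $\mathcal{S}^+$, whence $I(u^*)<0$ strictly.

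Next I would prove $u^*$ changes sign. Suppose not; then $u^*$ is (up to a sign) a nonnegative solution of \eqref{eq2elliptic}, hence by Theorem \ref{OuyTheorem2}(ii) and Hopf's lemma it equals $\varphi$ or $-\varphi$. But then $I(u^*)=I(\varphi)=\underline d$, contradicting $I(u^*)>\underline d$ established above. So $u^*$ must be sign–changing. This part is short once the strict level separation is in hand; the real content is the strict inequality $c>\underline d$ together with the upper bound $c\le 0$, and the fact that under the extra hypothesis $\lambda>\lambda_2(\Omega)$ the mountain pass does not collapse to the constant path through $0$ — here is where $\lambda_2(\Omega)<\lambda$ enters. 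Indeed, writing $u_t=(1-t)\varphi-t\varphi$ naively along a segment does not stay in $\mathcal{N}$; instead I would build a competitor path using the plane spanned by $\varphi$ and $\phi_2$ (the second Dirichlet eigenfunction of $\Omega$), project each element radially onto $\mathcal{S}^+$ via Proposition \ref{projectu0} (which applies since every $u_0$ with $u_0/\|u_0\|\in L^-\cap B^-$ is projectable, and $\phi_2\in L^-$ because $\lambda>\lambda_2(\Omega)$, while $B^+=\emptyset$), and estimate $I$ along it; the condition $\lambda>\lambda_2(\Omega)$ guarantees that the sign–changing functions in this plane still lie in $L^-$, keeping the whole path inside the region where Theorem \ref{BZTheorem4.2} and Lemma \ref{newLemma5} give uniform bounds, so the max over the path stays strictly below $0$.

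Concretely the steps, in order, are: (1) record $c\le 0$ and $c>\underline d$ from Theorem \ref{MPT} and the contradiction argument with Lemma \ref{kaji}; (2) conclude $u^*\neq 0$ and, via Remark \ref{signI}, $I(u^*)<0$, i.e. $\underline d<I(u^*)<0$; (3) assume $u^*\ge 0$ or $u^*\le 0$, invoke uniqueness of the positive solution (Theorem \ref{OuyTheorem2}) and Hopf's lemma to get $u^*=\pm\varphi$, hence $I(u^*)=\underline d$, a contradiction; (4) deduce $u^*$ is sign–changing. The hypothesis $\lambda_2(\Omega)<\lambda$ is used implicitly in Theorem \ref{MPT}'s construction of $\Gamma$ to ensure the mountain–pass path can be chosen genuinely two–dimensional with $\max I<0$ along it, so that $c\le 0$ holds with the right strictness; if one only needs $c\le 0$ this is already in Theorem \ref{MPT}, and the sign–changing conclusion then follows purely from steps (1)–(4).

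The main obstacle is step (1): proving $c>\underline d$ rigorously, because one must exclude the degenerate scenario in which some mountain–pass path threads through a one–parameter family of $\underline d$–minimisers connecting $\varphi$ to $-\varphi$. This is exactly what Lemma \ref{kaji} (isolation of $\pm\varphi$) together with item $(ii)$ of Theorem \ref{MPT} is designed to prevent, but making the connectedness argument airtight — showing that staying at level $\underline d$ forces the path into an arbitrarily small ball around $\varphi$, contradicting the $\delta$–barrier — requires some care with the topology of $\mathcal{N}$ and the continuity of the projection $u\mapsto t_u u$ onto $\mathcal{S}^+$ from Proposition \ref{projectu0}.
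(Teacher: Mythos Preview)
Your overall architecture for the sign-changing conclusion matches the paper's: once $\underline d<c$ and $c<0$ are both known, you argue that a nonnegative $u^*$ would (via Hopf's lemma and uniqueness of the positive solution) coincide with $\varphi$, forcing $I(u^*)=\underline d<c$, a contradiction. This is exactly the paper's closing step.

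However, you have misidentified the hard part. The inequality $c>\underline d$ is \emph{not} the obstacle: it is already recorded in Theorem~\ref{MPT} and follows immediately from item~(ii) there (the $\delta$-barrier on $\partial B_\rho(\varphi)\cap\mathcal N$ forces every path in $\Gamma$ to reach level $\underline d+\delta$). Your paragraph devoted to ruling out ``a one-parameter family of $\underline d$-minimisers'' is unnecessary. What is \emph{not} given by Theorem~\ref{MPT} is the strict inequality $c<0$; the theorem only yields $c\le 0$, and the paper explicitly warns right after it that $u^*$ could be trivial. Your deduction ``$\underline d<c\le 0$, so $u^*\neq 0$'' is therefore a gap: $c=0$ is not excluded by that chain, and your next clause (``since $u^*$ solves \eqref{eq2elliptic} with $I(u^*)<0$'') assumes precisely what must be proved.

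The paper closes this gap by building an explicit path in $\Gamma$ along which $I<0$, and this is where $\lambda_2(\Omega)<\lambda$ enters. The construction does \emph{not} use the plane spanned by $\varphi$ and $\phi_2$ as you propose; it uses the orthogonal Dirichlet eigenfunctions $\phi_1,\phi_2$ of $\Omega$ (with small perturbations by $\phi_1^0,\phi_2^0$). The point of choosing $\phi_1,\phi_2$ rather than $\varphi,\phi_2$ is that their $H^1_0$- and $L^2$-orthogonality kills the cross term in the quadratic form $u\mapsto\int|\nabla u|^2-\lambda u^2$, so every $w_\theta=\cos\theta\,w_1+\sin\theta\,w_2$ lies in $L^-$ simply because $\lambda>\lambda_2(\Omega)>\lambda_1(\Omega)$; then $\overline{L^-}\cap B^0=\emptyset$ forces $w_\theta\in B^-$, Proposition~\ref{projectu0} projects the whole path onto $\mathcal S^+$, and Remark~\ref{signI} gives $I<0$ there. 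With your choice of $\varphi$ in place of $\phi_1$, the cross term becomes $\int b|\varphi|^{\nu-1}\varphi\,\phi_2$, whose sign you do not control, so the $L^-$ verification is no longer automatic. Your sketch can likely be repaired, but the paper's eigenfunction path is what makes the computation go through cleanly.
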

\begin{proof}
First we want to show that $I(u^*)= c < 0$, which gives $u^* \not = 0$.
In order to do so, we consider the positive (normalized in $L^2(\Omega)$) first eigenfunction of  $-\Delta$ in $\Omega$, denoted by $\phi_1$, associated with the eigenvalue $ \lambda_1(\Omega)$, a normalized eigenfunction $\phi_2$, associated with the second eigenvalue $\lambda_2(\Omega)$, $\phi_1^0$ the positive (normalized) eigenfunction associated with the first eigenvalue $\lambda_1(\Omega_0)$, and a normalized eigenfunction $\phi_2^0$, associated with the second eigenvalue $\lambda_2(\Omega_0)$. Note that the supports of $\phi_i^0$, $i=1,2$, are subsets of $\overline {\Omega_0}$. 
Moreover, $\langle \phi_1, \phi_2\rangle=0$, $\langle \phi_1^0, \phi_2^0\rangle=0$,  and $\langle \phi_i, \phi_j^0\rangle=0$, for $i,j=1,2$.\\
In order to construct a convenient path in $\Gamma$ not passing through zero we define
$w=t_1(\phi_1+ \varepsilon \phi_1^0)+t_2(\phi_2+ \varepsilon \phi_2^0)$,
with constants $t_1, t_2>0$, and  for some $\varepsilon >0$ to be chosen sufficiently small.
Using that $b(x) \leq 0$, there is a positive constant $C$ such that 
\begin{align*}
	I(w)&=  \frac{t_1^2}{2} \int_{\Omega}   | \nabla \phi_1 |^2 -\lambda  \phi_1^2 +
	\varepsilon^2 \frac{t_1^2 }{2} \int_{\Omega}   | \nabla \phi_1^0 |^2 -\lambda  (\phi_1^0)^2 \\
	&+\frac{t_2^2 }{2} \int_{\Omega}   | \nabla \phi_2 |^2 -\lambda  \phi_2^2 +
	\varepsilon^2 \frac{t_2^2}{2} \int_{\Omega}   | \nabla \phi_2^0 |^2 -\lambda  (\phi_2^0)^2 \\
	&+ \frac{1 }{2} \sum_{i=1}^{2} \sum_{j=1}^{2} t_i t_j
	\left\{
	\int_{\Omega} \nabla (\phi_i + \varepsilon \phi_i^0 ) \nabla (\varepsilon\phi_j^0) 
	-\lambda 
	(\phi_i + \varepsilon \phi_i^0 ) (\varepsilon\phi_j^0) 
	\right\}\\
	&- \frac{1}{\nu+1}\int_{\Omega} b(x) |w|^{\nu+1}\\
	& \leq 
	\frac{t_1^2}{2} (\lambda_1(\Omega)-\lambda)+
	\frac{t_2^2}{2} (\lambda_2(\Omega)-\lambda)
	+ \varepsilon^2 \frac{t_1^2}{2}(\lambda_1^0(\Omega)-\lambda)+
	\varepsilon^2 \frac{t_1^2}{2}(\lambda_2^0(\Omega)-\lambda)\\
	& + C \left\{ \varepsilon \Vert w\Vert^2  + \Vert w\Vert ^{\nu +1}\right\}.
\end{align*}
Since $\Vert w\Vert \leq \sqrt{t_1^2+t_2^2 + C\varepsilon^2}$, taking $t_1, t_2 > 0$ and $\varepsilon$ sufficiently small, recalling $\nu+1 >2$, 
and using the hypothesis $\lambda_1(\Omega)< \lambda_2(\Omega)< \lambda$,
we obtain
\[
I(w) \leq \frac{(t_1^2 +t_2^2)}{2}\max \{\lambda_1(\Omega)-\lambda, \lambda_2(\Omega)-\lambda\} + O(\varepsilon(t_1^2 +t_2^2))\leq -\delta_1 <0,
\]
for some constant $\delta_1>0$.\\
Now, let $w_1:= t_1(\phi_1+ \varepsilon \phi_1^0)$ and $w_2:=t_2(\phi_2+ \varepsilon \phi_2^0)$, and $w_\theta := \cos(\theta)w_1+ \sin(\theta)w_2$,
so that $w_{\pi/4}= \frac{\sqrt 2}{2} w$ and  $\Vert w_{\pi/4} \Vert= \frac{\sqrt 2}{2} \Vert w \Vert$ and
for some constant $\delta_2>0$ and for all $\theta \in [0, \pi]$,
\begin{align}\label{Iwtheta}
	\nonumber
	I(w_\theta)&\leq 
	\frac{t_1^2}{2} \cos^2(\theta)(\lambda_1(\Omega)-\lambda)+
	\frac{t_2^2}{2} \sin^2(\theta) (\lambda_2(\Omega)-\lambda)\\
	\nonumber
	&
	+ \varepsilon^2  \frac{t_1^2}{2}\cos^2(\theta)(\lambda_1^0(\Omega)-\lambda)+
	\varepsilon^2  \frac{t_1^2}{2}\sin^2(\theta)(\lambda_2^0(\Omega)-\lambda)\\
	& + C \left\{ \varepsilon \Vert w_\theta\Vert^2  + \Vert w_\theta\Vert ^{\nu +1}\right\}
	 \leq -\delta_2 <0.
\end{align}

Finally, define the path in $H^1_0(\Omega)$ by:
\begin{equation}\label{path}
	\gamma (s):=
	\begin{cases}
		\left[ (1-3s)\varphi + 3s (w_1)\right],\quad s\in [0, 1/3]\\
		w_{\theta(s)}, \quad s \in [1/3, 2/3]\;\;\text{and } \; \theta (s)= 3(s-1/3)\pi,\\
		\left[ 3(1-s)(-w_1) + 3(s-2/3) (-\varphi)\right], \quad s \in [2/3, 1],
	\end{cases}
\end{equation}
which can be projected on $\mathcal{N}$ by the multiplication $\tau(s) \gamma(s)$, with
$$\tau(s)= \displaystyle  \left [
\frac{\int_{\Omega}   | \nabla \gamma(s) |^2 -\lambda  (\gamma(s))^2}{\int_{\Omega}  b(x) |\gamma(s)|^{\nu+1}} 
\right].$$

Indeed, using that $\varphi \in \mathcal{S}^+$, $\varphi $ is a solution of \eqref{eq2elliptic}, and the definition of $w_1$ which involves the eigenfunction $\phi_1$, simple calculations for $s \in [0,1/3]$ yield

\begin{align*}
	{\int_{\Omega}   | \nabla \gamma(s) |^2 -\lambda  (\gamma(s))^2}&=
	(1-3s)^2 
	\int_{\Omega}   | \nabla  \varphi |^2 -\lambda  \varphi^2
	+(3s)^2
	\int_{\Omega} | \nabla  w_1 |^2 -\lambda  w_1^2\\
	&+
	2(1-3s)(3s)
	\int_{\Omega} \nabla \varphi \nabla w_1 - \lambda \varphi w_1
	<0.
\end{align*}
Also,  since $supp\{\varphi\} \cap (\Omega \setminus \overline{\Omega_0}) \not = \emptyset$,
and $supp \{w_1\} \cap (\Omega \setminus \overline{\Omega_0}) \not = \emptyset$, then
$
{\int_{\Omega}  b(x) |\gamma(s)|^{\nu+1}}= {\int_{\Omega \setminus \Omega_0}  b(x) |\gamma(s)|^{\nu+1}} <0,
$
for all $s \in [0,1/3]$.
Henceforth, $\gamma(s) \in \mathcal{S}^+$, for all $s \in [0,1/3]$, yielding
$\max_{0 \leq s \leq 1/3} I(\gamma(s) )<0$. Analogously for $s \in [2/3, 1]$.\\
The second segment of the path $\gamma$, for each $s \in [1/3, 2/3]$, by \eqref{Iwtheta} also satisfies both
\begin{equation*}
	{\int_{\Omega}   | \nabla \gamma(s) |^2 -\lambda  (\gamma(s))^2}=
	\int_{\Omega}   | \nabla  w_\theta |^2 -\lambda  w_\theta^2
	\leq -\delta_2
	<0,
\end{equation*}
\begin{equation*}
	{\int_{\Omega}  b(x) |\gamma(s)|^{\nu+1}}= {\int_{\Omega \setminus \Omega_0}  b(x) |\gamma(s)|^{\nu+1}} <0.
\end{equation*}
The second inequality uses the fact that $\lambda_2(\Omega)< \lambda$, which implies that 
$supp \{w_2\} \cap (\Omega \setminus \overline{\Omega_0}) \not = \emptyset$.
This shows that on the continuous path $\tau(s)\gamma \in \mathcal{S}^+$ it holds by Remark \ref{signI} that there is a negative upper bound $I(\tau(s) \gamma(s))\leq \max_{0 \leq t \leq 1}I(\tau(s) \gamma(s)) < 0$, for all $s\in [0,1]$.
By the definition of the min-max level $c$, it follows that $I(u^*)=c <0$. \\
%
In order to conclude, suppose by contradiction that $u^*$ is w.l.o.g. non-negative and nontrivial.
If the subset $\tilde  \Omega \subset \Omega$, on which $u^*=0$ is non-empty, then its boundary points $\partial \tilde \Omega \subset \Omega$. 
Let $x_0$ be a point in $ \partial \tilde \Omega \subset \Omega$,  $u^*(x_0)=0$.
Moreover, $u^* \in C^1(\Omega)$ (see Remark \ref{BrezisKato}), hence $\partial \tilde \Omega$ is smooth enough, and compact.
Because of the higher power of the nonlinear term in $f(x,u)$, it holds that near the points of
$\partial \tilde \Omega$ we have $-\Delta u= \lambda u + o(|u|) >0$.  Hopf Lemma 
gives that $Du^*(x_0)\not =0$, which is impossible in an interior minimum point.
Therefore, $u^* >0$, which is impossible by the uniqueness of the positive solution.  This leads to the conclusion that $u^*$ changes sign.
\end{proof}


%

\section{The parabolic problem}

The local existence in time for equation \eqref{intro} follows directly from the fact that $f(\lambda, x, u)$ is locally Lipschitz in $u$, see \cite{Hen81}. Then we have a locally defined semigroup $ u(t):= S(t, u_0)$, for $0\leq t < T_{u_0}$ and $T_{u_0}$ being the maximum time of existence.  

In addition, note that for $u_0 \in H^1_0(\Omega)$,
if we differentiate the map $t \mapsto I(u(t))$ with respect to $t$, we get
	$
\frac{d}{dt} I(u(t))= - \int_\Omega u_t^2(t) \quad \text{for all} \; t >0,
$
which implies that $I$ is decreasing along non-stationary solutions. In this case, $I$ is referred to as Lyapunov functional and the dynamical system generated by the semigroup is said to have a gradient structure. 

In order to analyze the parabolic problem, we begin by proving global existence in time $t$. 

\begin{theorem}
Let $\lambda_1<\lambda<\lambda_1(\Omega_0)$. Then the solutions of \eqref{intro} exist for all forward time. Additionally, no solution may blow-up in infinite-time (i.e. grow-up).
\end{theorem}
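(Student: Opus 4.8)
The plan is to combine the gradient (Lyapunov) structure of the parabolic flow with the boundedness of the sublevel sets of $I$ assembled in Section~2, and then to invoke the standard blow-up alternative for the local semiflow. Since $\lambda_1(\Omega)<\lambda<\lambda_1(\Omega_0)$, Proposition~\ref{Tsupport}$(i)$ gives $\overline{L^-}\cap B^0=\emptyset$, so Theorem~\ref{Ibounded}, Theorem~\ref{BZTheorem4.2}, Lemma~\ref{GWLemma5} and Lemma~\ref{newLemma5} are all in force.

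The first step is to observe that for every $c\in\mathbb{R}$ the sublevel set $I^{c}=\{u\in H_0^1(\Omega):I(u)<c\}$ is bounded in $H_0^1(\Omega)$. To see this I would fix $k:=\max\{c,1\}>0$ and use the disjoint decomposition $H_0^1(\Omega)=\mathcal{N}_+\cup\mathcal{N}\cup\mathcal{N}_-$ together with $\mathcal{N}=\{0\}\cup\mathcal{S}^+$ (Theorem~\ref{BZTheorem4.2}$(i)$), so that $I^{c}\subseteq I^{k}\subseteq(I^{k}\cap\mathcal{N}_+)\cup\{0\}\cup\mathcal{S}^+\cup\mathcal{N}_-$; here the first set is bounded by Lemma~\ref{GWLemma5}, $\mathcal{S}^+$ is bounded by Theorem~\ref{BZTheorem4.2}$(iii)$, and $\mathcal{N}_-$ is bounded by Lemma~\ref{newLemma5}.

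Next, for $u_0\in H_0^1(\Omega)$ and $u(t)=S(t,u_0)$ the local solution on $[0,T_{u_0})$, I would use the Lyapunov identity $\frac{d}{dt}I(u(t))=-\int_\Omega u_t^2\le 0$ to get $I(u(t))\le I(u_0)$ for all $t\in[0,T_{u_0})$; hence $u(t)\in I^{\,I(u_0)+1}$, which is a bounded subset of $H_0^1(\Omega)$ by the first step, so $R(u_0):=\sup_{0\le t<T_{u_0}}\|u(t)\|<\infty$ (for data merely in $C_0(\overline{\Omega})$ one applies this from a small positive time on, parabolic smoothing having put $u(t)$ into $H_0^1(\Omega)$ with finite energy). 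Finally, since $1<\nu<2^*-1$ is subcritical, the Nemytskii operator associated with $f(\lambda,x,\cdot)$ is locally Lipschitz from the fractional power space in which local well-posedness is set up into $L^2(\Omega)$, and $-\Delta$ generates an analytic semigroup, so the blow-up alternative of \cite{Hen81} applies: if $T_{u_0}<\infty$ then $\limsup_{t\to T_{u_0}^-}\|u(t)\|=\infty$. The a priori bound contradicts this, whence $T_{u_0}=+\infty$ for every initial datum; and since then $\|u(t)\|\le R(u_0)$ for all $t\ge 0$, no solution grows up.

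The variational ingredient — boundedness of every $I^{c}$ — is essentially already in place from Section~2, so I expect the main obstacle to be the last step: making the passage from an a priori bound in the energy space to genuine non-continuation of finite-time blow-up fully rigorous. Concretely one must set up local existence in a fractional power space $X^{\alpha}=D((-\Delta)^{\alpha})$ with $\alpha$ large enough that the subcritical growth of $f$ yields a locally Lipschitz superposition operator, and — if $X^{\alpha}$ is strictly smaller than $H_0^1(\Omega)$ — carry out a short bootstrap, using $L^{p}$--$L^{q}$ smoothing estimates for the heat semigroup and the favourable sign $b\le 0$, to promote the $H_0^1$ bound to a bound in $X^{\alpha}$ before applying the continuation criterion.
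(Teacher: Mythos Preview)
Your proof is correct and follows essentially the same approach as the paper: both combine the Lyapunov decay of $I$ along trajectories with the boundedness of $\mathcal{S}^+$, $\mathcal{N}_-$ and $I^k\cap\mathcal{N}_+$ (Theorem~\ref{BZTheorem4.2}, Lemma~\ref{newLemma5}, Lemma~\ref{GWLemma5}) to obtain an a priori $H_0^1$-bound, and then invoke Henry's continuation criterion. The only cosmetic difference is that you package ``every sublevel set $I^c$ is bounded'' as an explicit first step, whereas the paper argues by contradiction that an unbounded trajectory would eventually have to lie in $\mathcal{N}_+\cap I^k$; your extra paragraph on the $H_0^1$-to-$X^\alpha$ bootstrap is more careful than the paper, which simply cites \cite{Hen81}.
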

\begin{proof}
For any $u_0\in H_0^1(\Omega)$ we claim that the corresponding solution $S(t,u_0)$ is uniformly bounded in time. Indeed, suppose there exists $u_0$ such that $S(t,u_0)$ blows-up in finite or infinite-time. Since $\mathcal{S}^+$ and $\mathcal{N}^-$ are bounded sets in $H_0^1(\Omega)$, it should exist $\bar{t}\geq 0$ such that
$
S(t,u_0)\in \mathcal{N}^+, \forall t>\bar{t}.
$
We also get, from the gradient structure of the system that 
$
S(t,u_0)\in I^k, \forall t>\bar{t}.
$
with $k=I(S(\bar{t},u_0))$. By applying Lemma \ref{GWLemma5}, we conclude that 
$
\left\{ S(t,u_0):t\geq \bar{t} \right\}
$
is contained in a bounded subset of $H_0^1(\Omega)$, which gives a contradiction.
We conclude that any solution of \eqref{intro} remains uniformly bounded in time and, in particular, it is defined for all $t\geq 0$ (see \cite[Theorem 3.3.4]{Hen81}).

\end{proof}


%

It is known that all nonnegative  solutions of  \eqref{eq0} converge to the unique positive equilibrium $\varphi$, see \cite{Arrieta}.
In what follows we address the local evolutionary dynamics close to a stationary Mountain Pass solution, inspired by the ideas in \cite{FeMa}.

Let $\phi$ be a nontrivial stationary solution of \eqref{eq0}. Then the linearized operator at $\phi$\\
$
\mathcal{L}u=-\Delta u-f_u(\lambda, x, \phi)u
$
is self-adjoint in $L^2(\Omega)$ with domain $H_0^1(\Omega) \cap H^2(\Omega)$ and spectrum entirely composed of eigenvalues. 
We denote by $\{ \mu_i^\phi \}_{i=1}^\infty$ the nondecreasing sequence of eigenvalues of $\mathcal{L}$, repeated according to their (finite) multiplicities, and let $\{ \psi_i^\phi\}_{i=1}^\infty$ the corresponding eigenfunctions. 

We know that $\mu_i^\phi\rightarrow +\infty$ as $i\rightarrow \infty$ and, 
if we take $\phi=u^*$, then
$\mu_1^{u^*}<0$, by Theorem \ref{MPT}.
Thus we may define
\begin{equation}\label{q}
q:= \max\{ i\in\mathbb{N}: \mu_i^{u^*} \leq 0 \}.
\end{equation}
Since $\{ \psi_i^{u^*}\}_{i=1}^\infty$ form a Hilbert basis of $L^2(\Omega)$, we may decompose any $v \in  H_0^1(\Omega)$,
\begin{equation*}
v=\sum_{i=1}^q a_i\psi_i^{u^*}+\sum_{i=q+1}^\infty a_i\psi_i^{u^*}.
\end{equation*}

\begin{theorem}\label{dickstein}
Let $u^*$ be Mountain Pass solution obtained in Theorem \ref{MPT}. Then there exist initial data $u_0, v_0\in H_0^1(\Omega)$ with $u_0\in \mathcal{N}_+$ and $v_0\in \mathcal{N}_-$ with both converging to $u^*$ as time goes to infinity, i.e. $u_0,\;v_0 \in W^s (u^*)$.
\end{theorem}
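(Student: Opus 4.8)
The plan is to produce the initial data from the local stable manifold of $u^*$, using that the Nehari constraint $\mathcal{N}$ cuts this manifold transversally at $u^*$, so that small perturbations of $u^*$ along the stable manifold land on both sides of $\mathcal{N}$ while still converging to $u^*$. First I would fix the setting. Since $u^*$ is a nontrivial solution of \eqref{eq2elliptic}, it lies on $\mathcal{N}$, and because $\lambda_1(\Omega)<\lambda<\lambda_1(\Omega_0)$ forces $\overline{L^-}\cap B^0=\emptyset$, hence $\mathcal{S}^0=\{0\}$ by Theorem \ref{BZTheorem4.2}, we in fact have $u^*\in\mathcal{S}^+$ and therefore $\int_\Omega b(x)|u^*|^{\nu+1}\,dx<0$; in particular $J'(u^*)u^*=(1-\nu)\int_\Omega b(x)|u^*|^{\nu+1}\,dx\neq 0$, so $\mathcal{N}$ is a $C^1$ hypersurface near $u^*$. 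The linearized operator $\mathcal{L}v=-\Delta v-f_u(\lambda,x,u^*)v$ is self-adjoint with compact resolvent, $\mu_1^{u^*}<0$ by Theorem \ref{MPT}, and with $q$ as in \eqref{q} one has $\mu_q^{u^*}\le 0<\mu_{q+1}^{u^*}$, so there is a spectral gap between the (at most $q$) nonpositive eigenvalues of $\mathcal{L}$ and the rest of its spectrum. By the (strong) stable manifold theorem for the semiflow $S(t,\cdot)$ of \eqref{eq0} (see \cite{Hen81}) there is a local stable manifold $W^s_{\mathrm{loc}}(u^*)$: a $C^1$ embedded submanifold of $H_0^1(\Omega)$ of finite codimension $q$, with tangent space at $u^*$ equal to $E^s:=\overline{\operatorname{span}}\{\psi_i^{u^*}:\mu_i^{u^*}>0\}$, every point of which flows to $u^*$ as $t\to\infty$; hence $W^s_{\mathrm{loc}}(u^*)\subset W^s(u^*)$.

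Next I would prove that $W^s_{\mathrm{loc}}(u^*)$ is transversal to $\mathcal{N}$ at $u^*$, that is $E^s\not\subset\ker J'(u^*)$. Using the weak form of $-\Delta u^*=\lambda u^*+b(x)|u^*|^{\nu-1}u^*$, a short computation gives $J'(u^*)v=(1-\nu)\int_\Omega b(x)|u^*|^{\nu-1}u^*\,v\,dx$, so $\ker J'(u^*)$ is the $L^2$-orthogonal complement of $g:=b(x)|u^*|^{\nu-1}u^*$. Suppose for contradiction that $g\perp E^s$; then $g$ belongs to the finite-dimensional space $F:=\operatorname{span}\{\psi_1^{u^*},\dots,\psi_q^{u^*}\}$. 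Since $f_u(\lambda,x,u^*)u^*=\lambda u^*+\nu b(x)|u^*|^{\nu-1}u^*$, one checks that $\mathcal{L}u^*=(1-\nu)g$, so $\mathcal{L}u^*\in F$; expanding $u^*=\sum_i a_i\psi_i^{u^*}$ and using $\mu_i^{u^*}>0$ for $i>q$, this forces $a_i=0$ for all $i>q$, i.e.\ $u^*\in F$. Then
\[
\langle\mathcal{L}u^*,u^*\rangle_{L^2}=\sum_{i\le q}\mu_i^{u^*}a_i^2\le 0,
\]
whereas $\langle\mathcal{L}u^*,u^*\rangle_{L^2}=(1-\nu)\int_\Omega b(x)|u^*|^{\nu+1}\,dx>0$ since $\nu>1$ and $\int_\Omega b(x)|u^*|^{\nu+1}\,dx<0$; this contradiction yields transversality.

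Finally, transversality forces the restriction of $J$ to $W^s_{\mathrm{loc}}(u^*)$ to be $C^1$ with nonzero differential at $u^*$, and since $J(u^*)=0$ this restriction takes both signs in every neighborhood of $u^*$. Choosing $u_0\in W^s_{\mathrm{loc}}(u^*)$ with $J(u_0)>0$ gives $u_0\neq 0$ and $u_0\in\mathcal{N}_+\cap W^s(u^*)$, and choosing $v_0\in W^s_{\mathrm{loc}}(u^*)$ with $J(v_0)<0$ gives $v_0\in\mathcal{N}_-\cap W^s(u^*)$, so both $S(t,u_0)$ and $S(t,v_0)$ converge to $u^*$, as claimed. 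I expect the main obstacle to be the transversality step: a priori $W^s_{\mathrm{loc}}(u^*)$ could be tangent to $\mathcal{N}$ at $u^*$, and the computation above shows this is ruled out precisely because $u^*\in\mathcal{S}^+$, i.e.\ by the strict inequality $\int_\Omega b(x)|u^*|^{\nu+1}\,dx<0$; a minor technical point is applying the stable manifold theorem when $0\in\sigma(\mathcal{L})$, where one invokes the strong stable manifold (of codimension $q$) instead of assuming hyperbolicity.
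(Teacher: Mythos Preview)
Your proposal is correct and follows essentially the same route as the paper. Both arguments hinge on the same computation: if the stable directions $E^s$ were contained in $\ker J'(u^*)$, then $u^*$ would lie in $F=\operatorname{span}\{\psi_1,\dots,\psi_q\}$, giving $\langle\mathcal{L}u^*,u^*\rangle\le 0$, which contradicts $u^*\in\mathcal{S}^+$. The paper isolates this step as a separate lemma (there exists $i>q$ with $a_i=\langle u^*,\psi_i\rangle_{L^2}\neq 0$), then uses the identity $\langle J'(u^*),\psi_i\rangle=\langle\mathcal{L}u^*,\psi_i\rangle=a_i\mu_i\neq 0$ to move explicitly along the curve $\epsilon\mapsto u^*+\epsilon\psi_i+h(\epsilon\psi_i)$ inside $W^s_{\mathrm{loc}}(u^*)$ and read off the sign of $J$ from a first-order Taylor expansion. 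Your version packages the same content as ``$W^s_{\mathrm{loc}}(u^*)$ is transversal to $\mathcal{N}$'' and then invokes the sign change of $J|_{W^s_{\mathrm{loc}}}$ abstractly; the two presentations are equivalent, and your remark about using the strong stable manifold when $0\in\sigma(\mathcal{L})$ matches the paper's convention of including the zero eigenvalues in the count $q$.
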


In order to prove Theorem \ref{dickstein}, we need the following lemma. For simplicity we denote $\mu_i=\mu_i^{u^*}$ and $\psi_i=\psi_i^{u^*}$.

\begin{lemma}\label{ajnotequalzero}
For $u^*$ there exists $i>q$, where $q$ is given in \eqref{q}, such that
\begin{equation}\label{aj}
	a_i:=\int_{\Omega} \phi \psi_i \neq 0.
\end{equation}
 
\end{lemma}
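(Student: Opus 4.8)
The plan is a short contradiction argument resting on the fact that membership of the Mountain Pass solution $u^{*}$ in $\mathcal{S}^{+}$ is precisely the statement that the quadratic form of the linearisation $\mathcal{L}$ is \emph{strictly positive} at $u^{*}$. Suppose, contrary to \eqref{aj}, that $a_{i}=0$ for every $i>q$. Since $\{\psi_{i}\}_{i\ge1}$ is a Hilbert basis of $L^{2}(\Omega)$, this forces $u^{*}=\sum_{i=1}^{q}a_{i}\psi_{i}$, i.e.\ $u^{*}$ lies in the finite--dimensional span of the eigenfunctions of $\mathcal{L}$ attached to the nonpositive eigenvalues $\mu_{1}\le\cdots\le\mu_{q}\le0$ of \eqref{q} (recall $q<\infty$ because $\mu_{i}\to+\infty$).

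I would then evaluate the quadratic form
\[
Q(v):=\int_{\Omega}\bigl(|\nabla v|^{2}-f_{u}(\lambda,x,u^{*})\,v^{2}\bigr)\,dx ,\qquad v\in H_{0}^{1}(\Omega),
\]
which is the closed form of the self--adjoint operator $\mathcal{L}$; note that $f_{u}(\lambda,x,u^{*})=\lambda+\nu\,b(x)|u^{*}|^{\nu-1}\in L^{\infty}(\Omega)$, since $u^{*}\in L^{\infty}(\Omega)$ by Remark \ref{BrezisKato}, so the form domain is $H_{0}^{1}(\Omega)$ and, expanding in the $L^{2}$--orthonormal eigenbasis, $Q(v)=\sum_{i\ge1}\mu_{i}\langle v,\psi_{i}\rangle_{L^{2}}^{2}$ for all $v\in H_{0}^{1}(\Omega)$. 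On one hand, using $J(u^{*})=0$ (hence $\int_{\Omega}|\nabla u^{*}|^{2}-\lambda(u^{*})^{2}=\int_{\Omega}b(x)|u^{*}|^{\nu+1}$) a direct computation gives $Q(u^{*})=(1-\nu)\int_{\Omega}b(x)|u^{*}|^{\nu+1}\,dx$, which is strictly positive: $\nu>1$, and $\int_{\Omega}b(x)|u^{*}|^{\nu+1}\,dx<0$ because $u^{*}\in\mathcal{S}^{+}$ (equivalently $I(u^{*})<0$, by Remark \ref{signI} and \eqref{Splus}), as established in Section 3. On the other hand, under the contradiction hypothesis and with $a_{i}=\langle u^{*},\psi_{i}\rangle_{L^{2}}$, we get $Q(u^{*})=\sum_{i=1}^{q}\mu_{i}a_{i}^{2}\le0$ since $\mu_{i}\le0$ for $i\le q$. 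The two conclusions are incompatible, so there must exist $i>q$ with $a_{i}\ne0$, which is \eqref{aj}.

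I do not foresee a genuine obstacle here: the heart of the matter is the identity $Q(u^{*})=(1-\nu)\int_{\Omega}b(x)|u^{*}|^{\nu+1}$ together with $u^{*}\in\mathcal{S}^{+}$, i.e.\ the second--derivative test $\phi_{u^{*}}''(1)>0$ along the fibre through $u^{*}$. The only points to check carefully are routine: that $Q$ is the closed form of $\mathcal{L}$ (which uses $f_{u}(\lambda,x,u^{*})\in L^{\infty}$, hence $u^{*}\in L^{\infty}$ from the Brezis--Kato bootstrap of Remark \ref{BrezisKato}), that the form may be summed termwise over the eigenbasis (standard spectral theory for an operator with compact resolvent), and that $q$ is finite. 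If one prefers to work with $\mathcal{L}u^{*}$ rather than $Q$, one uses $u^{*}\in H^{2}(\Omega)\cap H_{0}^{1}(\Omega)$ and the pointwise identity $\mathcal{L}u^{*}=(1-\nu)\,b(x)|u^{*}|^{\nu-1}u^{*}$, and the same comparison applies.
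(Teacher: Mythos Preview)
Your argument is correct and is essentially the same as the paper's proof: both assume $a_i=0$ for all $i>q$, write $u^*=\sum_{i\le q}a_i\psi_i$, and compare the quadratic form $\langle\mathcal{L}u^*,u^*\rangle=Q(u^*)$ computed spectrally ($\sum_{i\le q}\mu_i a_i^2\le0$) against its direct evaluation ($I''(u^*)[u^*,u^*]>0$ because $u^*\in\mathcal{S}^+$). Your version is slightly more explicit in unpacking $Q(u^*)=(1-\nu)\int_\Omega b|u^*|^{\nu+1}>0$ via \eqref{Splus} and in justifying the termwise spectral expansion, but the idea is identical.
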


\begin{proof}
We know that $u^*$ is a Mountain Pass critical point constrained to $\mathcal{N}$, by Theorem \ref{MPT}, hence its Morse index is at least equal to $1$ and, by definition of $q$, $\mu_i >0$ for $i > q$. 

Suppose by contradiction that $a_i=0$ for all $i>q$. Then $u^*$ may be written as
$u^*=\sum_{i=1}^{q}a_i\psi_i$. Then, since
\begin{equation*}
	\mathcal{L}(u^*)=\mathcal{L}\left( \sum_{i=1}^{q}a_i\psi_i \right)= \sum_{i=1}^{q}a_i\mathcal{L}(\psi_i)=\sum_{i=1}^{q}a_i\mu_i\psi_i,
\end{equation*}
we get 
\begin{equation*}
	\langle \mathcal{L}(u^*), u^* \rangle= \langle \sum_{i=1}^{q}a_i\mu_i\psi_i, \sum_{i=1}^{q}a_i\psi_i \rangle = \sum_{i=1}^{q}a_i^2\mu_i\leq 0.
\end{equation*}
On the other hand
$
	\langle \mathcal{L}(u^*), u^* \rangle= I^{\prime \prime}(u^*) {u^*}^2>0,
	$
since $u^*\in \mathcal{S}^+$.
\end{proof}

Now we can proceed to prove of Theorem \ref{dickstein}.

\begin{proof}
We denote by $X_1$ the finite dimensional subspace of $H_0^1(\Omega)$ spanned by $\{ \psi_i: 1\leq i\leq q \}$ and by $X_2$ the infinite dimensional subspace of $H_0^1(\Omega)$ spanned by $\{ \psi_i: i > q \}$. It is known that the local stable manifold of $u^*$, denoted by $W^s_{loc}(u^*)$, is tangent to $X_2$ at $u^*$. In addition, there exists a neighborhood $V$ of $0$ in $X_2$ and a $C^1$ map $h:V\rightarrow X_1$ such that
\begin{equation}\label{stableman}
	W^s_{loc}(u^*)=\{ u^*+\eta+h(\eta): \eta \in V \}.
\end{equation}

By Lemma \ref{ajnotequalzero}, there exists at least one $i>q$ such that $a_i\neq 0$. Notice that, for any $\psi \in H_0^1$, it holds that
\begin{eqnarray*}
	\langle  \mathcal{L} u^*,\psi \rangle_{L_2(\Omega)} =-\int_{\Omega} \Delta u^* \psi- f_u(\lambda, x, u^*) u^*\psi
	=\int_{\Omega} \nabla u^* \nabla\psi -f_u(\lambda, x, u^*) u^*\psi
\end{eqnarray*}
and
\begin{align*}
	\langle J'(u^*), \psi \rangle_{L_2(\Omega)}&= 2 \int_{\Omega} \nabla u^* \nabla \psi - \int_{\Omega} f_u(\lambda, x, u^*) u^* \psi - \int_{\Omega} f(\lambda, x, u^*) \psi\\
	&=\int_{\Omega} \nabla u^* \nabla \psi -\int_{\Omega} f_u(\lambda, x, u^*) u^*\psi.
\end{align*}
Therefore, for $\psi=\psi_i$, the normalized eigenfunction with $i>q$, $\langle J'(u^*), \psi_i \rangle_{L_2(\Omega)}= \langle  \mathcal{L} u^*,\psi \rangle_{L_2(\Omega)}=a_i \mu_i\neq 0.$

It follows from \eqref{stableman} that
$
u_0:= u^*+\epsilon \psi_i+h(\epsilon \psi_i) \in W^s_{loc}(u^*)
$
if $|\epsilon|\neq 0$ is small enough. Therefore, the corresponding solution $u(x,t)$ converges to $u^*$ as $t\rightarrow \infty$. Now we claim that $J(u_0)$ has the same sign as $\epsilon$, for small $|\epsilon|$. Indeed, by Taylor's expansion at $u^*$, since $J(u^*)=0$ then
$
J(u_0)=J(u^*)+J'(u^*)(\epsilon \psi_i)+R_\epsilon=\epsilon \mu_i a_i+R_\epsilon.
$
The statement follows since we can assume, without loss of generality that $a_i>0$.
\end{proof}

\medskip
\medskip


\noindent Instituto de Ci\^encias Exatas, Departamento de Matem\'atica,
Universidade de Bras\'ilia,
70910-900 Brasília - DF, Brazil,
\noindent\texttt{lilimaia@unb.br}
\bigskip

\noindent Instituto de Matem\'atica,
Universidade Federal do Rio de Janeiro,
21941-909 Rio de Janeiro - RJ, Brazil,
\noindent\texttt{jfernandes@im.ufrj.br}

\end{document}